\newtheorem{remark}{Remark}[section]
\renewcommand{\thefootnote}{\arabic{footnote}}
\DeclareMathOperator{\tr}{tr}
\DeclareMathOperator*{\argmin}{argmin}
\title{Parallel Energy-Minimization Prolongation for Algebraic Multigrid}
\author{ Carlo Janna\footnotemark[1] \footnotemark[2]
    \and Andrea Franceschini\footnotemark[3] \footnotemark[4]
    \and Jacob B. Schroder\footnotemark[5]
    \and Luke Olson\footnotemark[6] }
\begin{document}
\nolinenumbers
\maketitle

\renewcommand{\thefootnote}{\fnsymbol{footnote}}
\footnotetext[1] {M$^3$E S.r.l., via Giambellino 7, 35129 Padova, Italy, {\tt e-mail}
c.janna@m3eweb.it}
\footnotetext[2] {Department ICEA, University of Padova, via Marzolo 9, 35131 Padova,
Italy}
\footnotetext[3]{corresponding author}
\footnotetext[4] {Department ICEA, University of Padova, via Marzolo 9, 35131 Padova,
Italy, {\tt e-mail} andrea.franceschini@unipd.it}
\footnotetext[5] {Department of Mathematics and Statistics, University of New Mexico,
Albuquerque, NM 87131, USA, {\tt e-mail} jbschroder@unm.edu}
\footnotetext[6] {Siebel Center for Computer Science, University of Illinois at
Urbana-Champaign, 201 N. Goodwin Ave., Urbana, IL 61801, USA, {\tt e-mail}
lukeo@illinois.edu}
\renewcommand{\thefootnote}{\arabic{footnote}}

\begin{abstract}
Algebraic multigrid (AMG) is one of the most widely used solution techniques for linear systems
of equations arising from discretized partial differential equations. The popularity of AMG stems from its potential to
solve linear systems in almost linear time, that is with an 
$O(n)$ complexity, where $n$ is the problem size. This capability is crucial at the 
present, where the increasing availability of massive HPC platforms pushes for
the solution of very large problems. The key for a rapidly converging AMG method is a good
interplay between the smoother and the coarse-grid correction, which in turn requires 
the use of an effective prolongation. From a theoretical viewpoint,
the prolongation must accurately represent near kernel components and, at the same time,
be bounded in the energy norm.
For challenging problems, however, ensuring both these requirements is not easy and
is exactly the goal of this work. We propose a constrained minimization procedure
aimed at reducing prolongation energy while preserving the near kernel components in the span of interpolation.
The proposed algorithm is based on previous energy minimization 
approaches utilizing a preconditioned restricted conjugate gradients method, 
but has new features and a specific focus on parallel performance and implementation.  
It is shown that the resulting solver, when used for large real-world problems from various
application fields, exhibits excellent convergence rates and scalability and outperforms
at least some more traditional AMG approaches.
\end{abstract}

\textbf{Keywords:} Algebraic Multigrid, AMG, Preconditioning, Energy minimization,
Prolongation

\newcommand{\I}{\mathcal{I}}
\newcommand{\J}{\mathcal{J}}
\newcommand{\LL}{\mathcal{L}}

\newcommand{\rt}[1]{\textcolor{red}{#1}}
\newcommand{\bt}[1]{\textcolor{blue}{#1}}
\newcommand{\gt}[1]{\textcolor{green}{#1}}
\newcommand{\rtb}[1]{\textbf{\textcolor{red}{#1}}}
\newcommand{\teal}[1]{\textbf{\textcolor{teal}{#1}}}

\newcommand{\ww}{\widehat{w}}
\newcommand{\diag}{\mathrm{diag}}
\newcommand{\low}{\mathrm{low}}
\newcommand{\lmax}{l_{\scriptsize \mbox{max}}}

\newcommand{\rowsubi}[1]{\noindent #1_{i}}
\newcommand{\row}[1]{\noindent \mathbb{#1}}
\newcommand{\rowtent}[1]{\noindent \mathbb{#1}_0}
\newcommand{\localB}{\mathbb{B}}  
\newcommand{\localQ}{Q}  

\section{Introduction}

With the increasing availability of powerful computational resources, scientific and engineering
applications are becoming more demanding in terms of both memory and CPU time. For common
methods used in the numerical approximation to partial differential equations (e.g., finite difference, finite volume, or finite element),
the resulting approximation can easily grow to several millions or even billions of unknowns.
The efficient solution to the associated sparse linear system of equations
\begin{equation}\label{system}
A \mathbf{x} = \mathbf{b},
\end{equation}
either as a stand-alone system or as part of a nonlinear solve process, often
represents a significant computational expense in the numerical application.  Thus, research on sparse linear solvers
continues to be a key topic for efficient simulation at large scales.  One of the most popular sparse linear
solvers is algebraic multigrid (AMG)~\cite{BrMcRu1982,BrMcRu1984,RuStu1987} because of its potential for $O(n)$ computational cost
in number of degrees-of-freedom $n$ for many problem types.

A fast converging AMG method relies on the complementary action of
relaxation (e.g., with weighted Jacobi) and coarse grid correction, which is a
projection step focused on eliminating the error that is not reduced by
relaxation.  Even in a purely algebraic setting, the main algorithmic decisions
in multigrid are often based on heuristics for elliptic problems.  As a result, for more complex
applications, traditional methods often break down, requiring additional techniques to improve
accuracy with a careful eye on overall computational complexity.

Even with advanced AMG methods, robustness remains an open problem for a variety of
applications, especially in parallel.  Yet, there have been several advances in
recent years that have significantly improved convergence in a range of
settings. Adaptive AMG~\cite{maclachlan2006adaptive} and adaptive smoothed
aggregation~\cite{BreFalMacManMccRug05} are among early attempts to assess the quality of the
AMG setup phase \textit{during} the setup process, with the ability to adaptively improve 
the interpolation operators.  Later works focus on extending the adaptive ideas
to more general settings~\cite{paludetto2019novel}, and in particular,
Bootstrap AMG~\cite{bootstrap2011} further develops the idea of adaptive interpolation with least-squares interpolation coupled with locally relaxed vectors and multilevel eigenmodes.
Other advanced approaches have a focus on specific AMG components, such as energy minimization
of the interpolation operator~\cite{ManBreVan99,WaChSm2000,SalTum08,OlsSchTum11,ManOlsSchSou17},
generalizing the strength of
connection procedure~\cite{olson2010new,BrBrKaLi2015}, or by considering the nonsymmetric nature of the problem
directly~\cite{manteuffel2018nonsymmetric,manteuffel2019nonsymmetric}.

While AMG robustness and overall convergence has improved with combinations of the advances above,
the overarching challenge of controlling cost is persistent.  In this paper, we make
a number of related contributions with a focus on AMG effectiveness and efficiency at large scale.
Our key contributions are as follows:
\begin{itemize}

   \item The quality and sparsity of tentative interpolation is improved through a
      novel utilization of sparse $QR$ and a new process for sparsity pattern
      expansion that targets locally full-rank matrices for improved mode
      interpolation constraints;

   \item We accompany the energy minimization construction of interpolation
      with new energy and convergence monitoring, thus limiting the total cost;

   \item We apply a new preconditioning technique for the energy
      minimization process based on Gauss-Seidel
      applied to the blocks;

   \item We present the non-trivial and efficient parallel implementation in detail; and

   \item We demonstrate improved convergence and computational complexity with several large scale experiments.


\end{itemize}
The remainder of the paper is as follows.
We begin with the basics of AMG in~\Cref{sec:amg}.
In~\Cref{sec:emin}, we derive the energy minimization process based on QR
factorizations and introduce a method for monitoring reduction of energy in
practice. Finally, we conclude with several numerical experiments
in~\Cref{sec:numerics} along with a discussion on performance.


\section{Introduction to Classical AMG}\label{sec:amg}


The effectiveness of AMG as a solver depends on the complementary relationship
between relaxation and coarse-grid correction, where the error not reduced by
relaxation on the fine grid (e.g., with weighted-Jacobi or Gauss-Seidel) is accurately represented on
the coarse grid, where a complementary error correction is computed. For a more
in-depth introduction to AMG, see the works \cite{BrHeMc2000,TrOo2001}.  Here, we focus
our description of AMG on the coarse grid and interpolation setup, which are most relevant to the rest of the paper.

Constructing the AMG coarse grid begins with a partition of the $n$ unknowns of $A$
into a C-F partition of
$n_f$ fine nodes and $n_c$ coarse nodes: $\{0,\ldots,n-1\} = \mathcal{C}\cup\mathcal{F}$.
From this, we assume an ordering of $A$ by F-points followed by C-points:
\begin{equation}\label{CFspl}
A = \left[\begin{array}{cc}
A_{ff}   & A_{fc} \\
A_{fc}^T & A_{cc} \\
\end{array}\right],
\end{equation}
where for example, $A_{ff}$ corresponds to entries in $A$ between two F-points.
We also assume $A$ is SPD so that $A_{cf} = A_{fc}^T$. In classical
AMG, prolongation takes the form
\begin{equation}\label{prol}
P = \left[\begin{array}{c}
W \\
I
\end{array}\right],
\end{equation}
where $W$ must be sparse (for efficiency) and represents interpolation from the coarse grid to fine grid $F$-points.

In constructing prolongation of the form~\cref{prol}, there are two widely accepted guidelines, the so-called  \textit{ideal}~\cite{BraFal10,xu2018ideal} and
\textit{optimal}~\cite{XuZik17,brannick2018optimal} forms of prolongation. Although both of these
are not feasible in practical applications, leading to very expensive and dense
prolongation operators, the concepts behind their definition are valuable guides
for constructing effective $P$.

Ideal prolongation is constructed by starting with the above C-F partition and
constructing $P_{\text{id}}$ as
\begin{equation}
P_{\text{id}} =
\left[
  \begin{array}{c}
-A_{ff}^{-1} A_{fc}\\
I \\
\end{array}
\right].
\end{equation}
Making $P_{\text{id}}$ the goal for interpolation is motivated by Corollary
3.4 from the theoretical work~\cite{FaVa2004}. Here, the main assumption is a
classical AMG framework where $P$ is of the form in equation (\ref{prol}).\footnote{The other
assumptions are specific choices for the map to F-points $S = [I, 0]$, for the map to
C-points $R = [0, I]$, and for relaxation $X = \| A \| I$.}  In this
setting, the choice of $W = -A_{ff}^{-1} A_{fc}$ minimizes
the two-grid convergence of AMG relative to the choice of $P$, i.e., relaxation is fixed.
Motivating our later energy minimization approach,
$P_{\text{id}}$ can be viewed as having zero energy rows, as $A P_{\text{id}}$
is zero at all F-rows.  Additionally, this classical AMG perspective likely
makes the task of energy minimization easier, in that the conditioning of
$A_{FF}$ is usually superior to that of $A$.

With \textit{optimal} interpolation, the goal of interpolation is to capture
the algebraically smoothest modes in span$(P)$, i.e., the modes left behind by relaxation.
More specifically following~\cite{brannick2018optimal}, let
$\lambda_1 \le \lambda_2 \le \dots \le \lambda_n$ and $v_1, v_2, \dots, v_n$ be
the ordered eigenvalues and eigenvectors of the generalized eigenvalue
problem $A x = \lambda \tilde{M} x$, where $\tilde{M}$ is the symmetrized
relaxation matrix, e.g., the diagonal of $A$ for Jacobi.
(See the work~\cite{brannick2018optimal} for more details.)  Then, the two-grid convergence of AMG
is minimized if
\begin{equation}
   \label{popt}
   \mbox{span}(P) = \mbox{range}( v_1, v_2, ..., v_{n_c} ).
\end{equation}
Note, that no assumptions on the structure of $P$ are made, as in equation (\ref{prol}).
Motivating our later energy minimization approach, equation (\ref{popt}) indicates
that span$(P)$ should capture low-energy modes relative to relaxation,
which our Jacobi or Gauss-Seidel preconditioned energy minimization approach
will explicitly target.  Moreover, our energy minimization approach will incorporate
constraints which explicitly force certain vectors to be in span$(P)$,
where these vectors are chosen to represent the $v_i$ with smallest eigenvalues.

The idea of energy minimization AMG with constraints has been exploited for both symmetric and
non-symmetric operators in several works~\cite{ManBreVan99,WaChSm2000,SalTum08,OlsSchTum11,ManOlsSchSou17},
and, though requiring more computational effort than classical interpolation
formulas, often provides improved preconditioners that balance the extra
cost.

\section{Energy minimization prolongation}\label{sec:emin}

The energy minimization process combines
the key aspects of ideal and optimal prolongation.  To define this, we first introduce
$V$, a basis for the \textit{near kernel} of $A$ or the \textit{lowest energy} modes of $A$.
Then, energy minimization seeks
to satisfy two requirements:
\begin{enumerate}[font=\bfseries]
  \item \textbf{Range:} The range of prolongation must include the near kernel $V$:
\begin{equation}\label{inKern}
V \subseteq \mathrm{range}(P)
\end{equation}
\item \textbf{Minimal:} The energy of each column of $P$ is minimized:
\begin{equation}\label{trace}
  P = \argmin_{P} \left(\mbox{tr}(P^T A P)\right).
\end{equation}
\end{enumerate}
To construct a $P$ that contains $V$ in the range and has minimal energy, we next introduce the key components
needed by most energy minimization approaches, namely
\begin{enumerate}[label=\roman*)]  
  \item a sparsity pattern for efficient application of $P$;
  \item a constraint to enforce~\cref{inKern}; and
  \item an approximate block diagonal linear system for solving equation~\cref{trace}.
\end{enumerate}
For practical use in AMG, the prolongation operator must be sparse,
therefore construction
begins by defining a sparse non-zero pattern for $P$. Assume that a
strength of connection (SoC) matrix $S$ is provided where nonzero entries denote a strong coupling between degrees-of-freedom 
\cite{RuStu1987,olson2010new,BrBrKaLi2015}.
Next, let $P_0$ be a \textit{tentative} prolongation with 
non-zero pattern $\mathcal{P}_0$, to be used as an initial guess for $P$.\footnote{See
\cref{sec:construct-ptent} for our contributions regarding the construction of $P_0$,
which is based on the adaptive algorithm~\cite{IsoFriSpiJan21}. $P_0$ can be
defined similarly to the tentative prolongation from smoothed aggregation AMG~\cite{Van96}
in that $P_0$ interpolates the basis $V$, but needs further improvement, for example,
with energy minimization.}
We next obtain an enlarged sparsity pattern $\mathcal{P}$ by growing 
$\mathcal{P}_0$ to include all strongly connected
neighbors up to distance $k$. 
Denoting with $\overline{P}$ the unitary matrix obtained from $P$ by replacing its
non-zeros with unitary entries, this is equivalent to 
\begin{equation}\label{enlPatt}
   \overline{P}_k = S^k \overline{P}_0,
\end{equation}
where $\mathcal{P}$ is the pattern of $\overline{P}_k$ (see~\cite{OlsSchTum11}).
%

For a constraint condition satisfying~\cref{inKern}, we
start by splitting the near kernel basis $V$ with the same C-F
splitting as in~\cref{CFspl}:
\begin{equation}
V = \left[\begin{array}{c}
V_f \\
V_c \\
\end{array}\right].
\end{equation}
Recalling the form of interpolation~\cref{prol}, the near kernel 
requirement for $P$ becomes
\begin{equation}\label{TSP}
W \; V_c = V_f,
\end{equation}
which is a set of $n_f$ conditions on the rows of $W$.
By denoting $\rowsubi{w}^T$ as the $i$-th row of $W$ (and $P$) and $\rowsubi{v}^T$ as the $i$-th row $V$,
condition~\cref{TSP} is then exploited row-wise as
\begin{equation}\label{TSP_cond}
   V_c^T \rowsubi{w} = \rowsubi{v} \qquad \forall i \in \mathcal{F}.
\end{equation}
Using the sparsity pattern $\mathcal{P}$, we rewrite \cref{TSP_cond} for only the
nonzeros in each row $\rowsubi{w}$.  Letting the index set $\mathcal{J}_i$ be
the column indices in the $i$-th row of $P$, this becomes
\begin{equation}\label{Constr}
   V_c(\mathcal{J}_i,:)^T \overline{\rowsubi{w}} = \rowsubi{v} \qquad \forall i \in {\mathcal F},
\end{equation}
where $\overline{\rowsubi{w}} = \rowsubi{w} (\mathcal{J}_i)$ collects only the nonzeros of $\rowsubi{w}$. 
It is
important to note that for each of the $n_f$ fine points, the constraints~\cref{Constr},
are independent of each other, because each
entry of $W$ appears in only one constraint. Denote by $\widetilde{p}$
the column vector collecting the nonzero entries of $P$ \emph{row-wise}.
Similarly, we denote by $p$ the column vector containing the nonzero entries of $P$ \emph{column-wise}. By definition, $\widetilde{p}$ and $p$ have the same size, equal to the number of nonzeros in $P$.
Next, we write
the constraints in the following matrix form:
\begin{equation}\label{Constr_Mat}
\widetilde{B}^T \widetilde{p} = \widetilde{g},
\end{equation}
where $\widetilde{g}$ collects the $\rowsubi{v}$ in~\cref{Constr} into a single vector,
and where $\widetilde{B}^T$, is a block diagonal matrix
composed of $V_c(\mathcal{J}_i,:)^T$ due to the independence of constraints.

Lastly, we describe the reduced linear system framework for approximating~\cref{trace}.
Minimizing~\cref{trace} is equivalent to minimizing the energy of the individual
columns of $P$ on the prescribed nonzero pattern:
\begin{equation}\label{mincol}
p_i = \argmin_{p_i \in \mathcal{P}_i} p_i^T A p_i,
\end{equation}
where $p_i$ is the $i$-th column of $P$ and $\mathcal{P}_i$ is the sparsity pattern of column $i$.

Next, let $\mathcal{I}_i$ be the set of nonzero row indices of the $i$-th column of $W$ and
let $\overline{h}_i$ be the vector collecting the nonzero entries of the $i$-th column
of $W$.  Then the minimization in~\cref{mincol} defines $\overline{h}_i$ with
\begin{equation}\label{minblk}
A(\mathcal{I}_i,\mathcal{I}_i) \overline{h}_i = - A(\mathcal{I}_i,i) \qquad
\forall i \in \mathcal{C},
\end{equation}
where $A(\mathcal{I}_i,\mathcal{I}_i)$ is a square, relatively dense, submatrix of $A$
corresponding to the allowed nonzero indices $\mathcal{I}_i$ and $A(\mathcal{I}_i,i)$
is a vector corresponding to the $i$-th column of $A$ at the allowed nonzero indices.
Also in this case, each column of $P$ satisfies~\cref{mincol} independently. Thus, denoting by $p$ the
column vector collecting the nonzero entries of $P$ \emph{column-wise} (i.e., a rearranged version of $\widetilde{p}$) 
and denoting by $f$ the vector collecting each $-A(\mathcal{I}_i,i)$, 
the minimization~\cref{mincol} is recast as
\begin{equation}\label{11blk}
K p = f,
\end{equation}
where $K$ is block diagonal with $A(\mathcal{I}_i,\mathcal{I}_i)$ on the $i$-th block.

The two conditions, one on the range of $P$ and one on the minimality of $P$,
together form a constrained minimization problem, whose
solution is the desired energy minimal prolongation.
Casting this problem using Lagrange multipliers results in the 
saddle point system
\begin{equation}\label{sadsys}
\left[ \begin{array}{cc}
K   & B \\
B^T & 0 \\
\end{array} \right] \left[
\begin{array}{c}
p \\
\lambda \\
\end{array}
\right] = \left[
\begin{array}{c}
f \\
g \\
\end{array}
\right].
\end{equation}

The elements in~\cref{sadsys} are the same as those defined in
\cref{Constr_Mat,11blk}, with the exception that $\widetilde{B}$,
$\widetilde{g}$, and $\widetilde{p}$ are reordered following the columns of $P$,
and $\lambda$ is the vector of Lagrange multipliers whose values are not needed
for the purpose of setting up the prolongation.
We emphasize that, with the entries of $p$ enumerated column-wise with respect to $P$, $K$
is block diagonal. Likewise, if $p$ is enumerated following the rows of $P$, then $B$ becomes
block diagonal. Unfortunately, there is no sorting of $p$ able to make both $K$ and $B$
block diagonal at the same time. Nevertheless, it is possible to take advantage of this
underlying structure in the numerical implementation, as will be shown later. 
Leveraging the block structure of $B$ is also important, because, as we will see
in Section~\ref{sect:min}, our algorithm to minimize energy requires several applications of
the orthogonal projector given as
\begin{equation}
\Pi_B = I - B (B^T B)^{-1} B^T.
\end{equation}
%

The system~\cref{sadsys} follows closely the method from ~\cite{OlsSchTum11}.  In sections 
\ref{sec:construct-ptent}--\ref{sec:precon}, we outline our proposed improvements to energy minimization.

\subsection{Minimization through Krylov subspace methods}\label{sect:min}

Following~\cite{OlsSchTum11}, energy minimization proceeds by
starting with a tentative prolongation, $P_0$, that satisfies the near kernel
constraints~(see~\cref{TSP}). Denoting by $p_0$ the tentative prolongation in vector form with nonzero entries
collected \emph{column-wise},
where we highlight that the subscript 0 does not refer to a specific $P$ column as $p_i$ in Eq. \eqref{mincol},
these constraints read
\begin{equation}\label{eq:vec_tent_constraint}
B^T p_0 = g.
\end{equation}
Defining the final prolongation as the tentative $p_0$ plus a correction
$\delta p$ gives
\begin{equation}
p = p_0 + \delta p.
\end{equation}
Then, the problem is recast as finding the optimal correction $\Delta P^*$:
\begin{equation}
\Delta P^* = \argmin_{\Delta P \in \mathcal{P}} \left(\tr((P_0+\Delta P)^T A (P_0+\Delta P))\right),
\end{equation}
subject to the constraint $B^T \delta p = 0$, where $\delta p$ is the vector form of $\Delta P$ with nonzero entries again
collected \emph{column-wise}. By recalling that $\Delta P$ has non-zero
components only in $W$~---~i.e., $\Delta P = [\Delta W^T, 0]^T$~---~and using the C-F
partition~\eqref{CFspl}, we write
\begin{equation}\label{eq:trInc}
\begin{split}
  \tr ((P_0+\Delta P)^T &A (P_0+\Delta P))=\\
&= \underbrace{\tr(\Delta W^T A_{ff} \Delta W)
          + 2 \tr(W_0^T A_{ff} \Delta W)
          + 2 \tr(A_{fc}^T \Delta W)}_{\text{dependent on $\Delta P$}}\\
&+ \underbrace{\tr(W_0^T A_{ff} W_0) + 2 \tr(W_0^T A_{fc})
          +   \tr(A_{cc})}_{\text{independent of $\Delta P$}}.
\end{split}
\end{equation}
Using the preset non-zero pattern for $\Delta W$, the problem is rewritten in vector from to 
minimizie \emph{only} the terms depending on $\Delta P$:
\begin{equation}
   \label{eqn:mindeltaw}
   \argmin_{\delta w} ( \delta w^T K \delta w + 2 w_0^T K \delta w + 2 f^T \delta w),
\end{equation}
subject to the constraint
\begin{equation}\label{constraint}
B^T \delta w = 0,
\end{equation}
where $K$ and $f$ are defined as in~\cref{sadsys} and $\delta w$ and $w_0$ are the vector
forms of $\Delta W$ and $W_0$, respectively, with the nonzero entries collected \emph{column-wise}.

This minimization can be performed using (preconditioned) conjugate gradients 
by ensuring that both the initial solution and the search
direction satisfy the constraint~\cite{OlsSchTum11}. To do this, return to the orthogonal projector
\begin{equation}
\Pi_B = I - B (B^T B)^{-1} B^T,
\end{equation}
and apply conjugate gradients to the singular system
\begin{equation}\label{eq:projSys}
\Pi_B K \Pi_B \delta w = - \Pi_B (f + K w_0),
\end{equation}
starting from $\delta w = 0$. Due to its block diagonal structure, it is straightforward to find a QR
decomposition of $B = QR$, and the projection simply becomes:
\begin{equation}
\Pi_B = I -Q Q^T.
\end{equation}

Finally, introducing $K_\Pi = \Pi_B K \Pi_B$ and $\bar{f} = f + K w_0$, the Krylov subspace built
by conjugate gradients is:
\begin{equation}
\mathcal{K}_m = \mbox{span}\{\Pi_B \bar{f}, K_\Pi \bar{f}, K_\Pi^2 \bar{f},
                \dots, K_\Pi^m \bar{f} \}.
\end{equation}
This is equivalent to applying the nullspace
method~\cite{BenGolLie99} to the saddle-point system~\cref{sadsys}.

\subsection{Improved tentative interpolation $P_0$ and orthogonal projection $\Pi_B$
with sparsity pattern expansion and sparse $QR$}\label{sec:construct-ptent}

A crucial point for energy minimization interpolation is the availability of a tentative prolongation
$P_0$ that satisfies the near kernel representability constraint~\cref{TSP}. While this is
relatively straightforward for scalar diffusion equations where $V$ has only one column, it is
not trivial for vector-valued PDEs such as elasticity. One specific difficulty is that
while forming the $i$-th constraint equation~\cref{Constr},
we must ensure that $V_c(\mathcal{J}_i,:)$ is full-rank. If it is not full rank, then
no prolongation operator is able to satisfy~\cref{TSP} and 
the solution to~\cref{sadsys} is not possible in general.  We consider two possible remedies:
\begin{enumerate}
\item Add strongly connected neighbors to the pattern of the $i$-th row of $P$ to
   enlarge $\mathcal{J}_i$ until $V_c(\mathcal{J}_i,:)$ is full-rank (i.e., sparsity pattern expansion); or
\item Compute the least square solution of~(\ref{Constr}) as is done in~\cite{OlsSchTum11}.
\end{enumerate}

A novel aspect of this work is our pursuit of sparsity pattern expansion.
We find that this careful construction of the sparsity pattern, which guarantees
that each constraint is exactly satisfied as $V_c(\mathcal{J}_i,:)$ is always full-rank,
greatly improves performance on some problems.

To accomplish this task, we adopt a dynamic-pattern, least-squares fit (LSF) procedure
that satisfies~\cref{TSP} or equivalently~\cref{eq:vec_tent_constraint}.
For each row of $W$ (corresponding to a fine node $i$), this is equivalent to 
satisfying the local dense system~\cref{Constr}. 
For simplicity, we rewrite equation~\cref{Constr} by dropping the row subscript $i$, with $\row{w} = \overline{\rowsubi{w}}$ and $\row{v} = v_i$, yielding 
\begin{equation}\label{TSP2}
   \localB \row{w} = \row{v},
\end{equation}
where $\localB = V_c(\mathcal{J}_i,:)^T$ corresponds to a diagonal block of $B^T$ in~\cref{sadsys}, when
the non-zero entries of $P$ are enumerated row-wise. 

Considering this generic FINE node represented by equation~\cref{TSP2}, if there are a sufficient number of COARSE
node neighbors, then $\localB$ has more columns than rows. Hence if we assume a full-rank $\localB$, then~\cref{TSP2} is an underdetermined
system and can be solved in several ways. In order to have a sparse
solution $\row{w}$, we choose a minimal set of columns of $\localB$ using the
\textit{max vol} algorithm~\cite{Knu85,Gor08} to have the best basis.  Here, we satisfy~\cref{TSP2} exactly. 
We note that a related \textit{max vol} approach to computing C-F splittings is used in \cite{brannick2018optimal}.
\begin{remark}
   We adopt this form of a $QR$ factorization with \textit{max vol}, that is as sparse as possible, in order to improve the complexity of our
   algorithm and quality of $P_0$.  While it is a relatively minor change to the algorithm's structure, we count it as a useful novelty of our efficient implementation.
\end{remark}

If, on the contrary, the number of neighboring COARSE nodes is not sufficient ($\row{v} \notin \mbox{span}(\localB)$), then~\cref{TSP2} cannot be satisfied because it is overdetermined.  This may occur
not only when $\localB$ is \textit{skinny},~i.e., the number of columns is smaller
than the number of rows, but more often because $\localB$ is rank deficient even
if it has a larger number of columns, i.e., it is \textit{wide}.
In elasticity problems and in particular with shell finite elements,
this issue arises often in practice with standard distance one coarsening,
where during the coarsening process, some FINE nodes may occasionally
remain isolated. Our strategy is to gradually increase the interpolation distance
for violating nodes where $\row{v} \notin \mbox{span}(\localB)$, thus widening the interpolatory set (i.e., adding columns to $\localB$) where it is
necessary. \cref{PTENT_setup} describes how to set-up $\widehat{p_0}$, the vector form of initial tentative prolongation $\widehat{P_0}$. (Algorithm~\ref{EMIN-SetUp}, described later, will further process $\widehat{P_0}$ for the final tentative prolongation $P_0$).
\begin{algorithm}[h!]
\caption{\bf Tentative Prolongation Set-Up}
\begin{algorithmic}[1]
\Procedure{PTent\_SetUp}{$S$, $V$, $\lmax$}\\
\hspace*{\algorithmicindent}\textbf{input:} $S$ -- strength of connection matrix\\
\hspace*{\algorithmicindent}$\phantom{\mbox{\textbf{input:}}}$ $V$ -- near kernel modes\\
\hspace*{\algorithmicindent}$\phantom{\mbox{\textbf{input:}}}$ $\lmax$ -- maximum interpolation distance\\
\hspace*{\algorithmicindent}\textbf{output:} $\widehat{p_0}$ -- initial tentative prolongation
\ForAll {FINE nodes $i$}
   \State Set $l = 0$;
   \While {$l < \lmax$}
      \State Set $l = l+1$;
      \State Form $\mathcal{N}_i$ with the strong neighbors of $i$ up to distance $l$;
      \State Select the best columns of $V_c(\mathcal{N}_i,:)^T$ using \textit{max vol}
        to form $\localB$;
      \State Compute the least-squares solution to $\localB \row{w} = \row{v}$;
      \If {$\| \row{v} - \localB \row{w} \|_2 = 0$}
        \State {Assign $\row{w}^T$ as $i$-th row of $\widehat{p_0}$};
        \State {\bf break};
      \EndIf
   \EndWhile
\EndFor
\EndProcedure
\end{algorithmic}
\label{PTENT_setup}
\end{algorithm}

\begin{remark}
Avoiding a \textit{skinny} or rank deficient 
local $\localB$ block is also important for the construction of the orthogonal projection
$\Pi_B = I - B (B^T B)^{-1} B^T$ that maps vectors of $\mathbb{R}^n$ to
$\mbox{Ker}(B^T)$. Note that $\Pi_B$ is used not only to correct the conjugate
gradients search direction, but also to ensure the initial prolongation satisfies
the near kernel constraint. 
In general, the size of $\localB$
during energy minimization is larger than when constructing 
tentative prolongation because of the additional sparsity pattern expansion in (\ref{enlPatt}).  Thus, this ``rank-deficiency'' issue
is ameliorated, but there are pathological cases where it has been observed in practice. 
\end{remark}

We now describe our procedure for computing local blocks of $\Pi_B$, called $\Pi_\localB$, and a single row of
the final tentative prolongation $P_0$, called $\rowtent{w}^T$.
The global procedure to build $P_0$ and
$\Pi_B$ is then obtained by repeating this local algorithm for each FINE row. 
Denote by $\rowtent{\ww}^T$ the \textit{starting} tentative prolongation
row. We use the word \textit{starting}, because in general, we can receive a tentative
prolongation that does not satisfy the constraint. That is, we may have:
\begin{equation}
\label{eqn:Btildestart}
   \localB \rowtent{\ww} \ne \row{v}.
\end{equation}
Denote by $n_l$ and $m_l$ the dimensions of the local system, so that
$\localB \in \mathbb{R}^{n_l \times m_l}$. To fulfill condition~\cref{TSP2}, we must find
a correction to $\rowtent{\ww}$, say $\row{\delta}$, such that:
\begin{equation}
   \localB \row{\delta} = \row{v} - \localB \rowtent{\ww} = \row{r}
\label{resEq}
\end{equation}
and then set:
\begin{equation}
   \rowtent{w} = \rowtent{\ww} + \row{\delta}.
\end{equation}
To enforce condition \cref{constraint} efficiently, we construct an orthonormal basis of
$\mbox{range}(\localB)$, say $\localQ$, that gives rise to the desired local orthogonal projector:
\begin{equation}
\Pi_\localB = I - \localQ \localQ^T.
\end{equation}

If the prolongation pattern is large enough, the vast majority of the above local problems
will be such that $n_l \ge m_l$ with $\localB$ being also full-rank,~i.e.,
$\mbox{rank}(\localB) = m_l$. Thus, an \textit{economy-size} QR decomposition is firstly
performed on $\localB$, $\localB = \localQ R$ with
$\localQ \in \mathbb{R}^{n_l \times m_l}$ and $R \in \mathbb{R}^{m_l \times m_l}$, and
$\localQ$ is used to form the local projector.
Then, through the same QR decomposition, we compute $\row{\delta}$ as the least
norm solution of the underdetermined system~(\ref{resEq}):
\begin{equation}
   \row{\delta} = (\localB^T \localB)^{-1} \localB^T \row{r} = (R^T \localQ^T \localQ R)^{-1} R^T \localQ^T \row{r}= R^{-1} \localQ^T \row{r}.
\end{equation}
Note that any solution to~(\ref{resEq}) would be equivalent, as the optimal choice in
terms of global prolongation energy is later computed by the restricted CG algorithm.
If the initial tentative prolongation arises from the LSF set-up, it should already
fulfill~\cref{resEq} and $\row{r} \equiv 0$. However, in the most difficult cases
even extending the interpolatory set with large distance $l_{\text{max}}$ is not sufficient to guarantee
an exact interpolation of the near kernel for all the FINE nodes.
For these FINE nodes~---~i.e., when  $n_l < m_l$ or $\localB$ is not full-rank~---~we compute an SVD
decomposition of $\localB$:
\begin{equation}
\localB = U \Sigma V^T.
\end{equation}
From the diagonal of $\Sigma$, we determine the rank of $\localB$, say $k_l$, and use the
first $k_l$ columns of $U$ to form $\localQ$ and thus $\Pi_\localB$. Finally, since in
this case it could be impossible to satisfy the constraint because the system is
overdetermined, we use the least square solution to~\cref{resEq} to compute the
correction for $\rowtent{\ww}$:
\begin{equation}
   \row{\delta} = V \Sigma^{\dagger} U^T \row{r}.
\end{equation}
It is important to recognize that for these specific FINE nodes, energy minimization
cannot reduce the energy, because the constraint does not leave any degree of freedom.
Consequently, this situation should be avoided when selecting the COARSE
variables and the prolongation pattern, because a prolongation violating the near kernel
constraint will likely fail in representing certain smooth modes.
The pseudocode to set-up $\Pi_B$ and correct the initial tentative prolongation
$P_0$ (after~\cref{PTENT_setup}) is provided in~\cref{EMIN-SetUp}.

\begin{algorithm}
\caption{\bf Energy Minimization Set-Up}
\begin{algorithmic}[1]
\Procedure{EMIN\_SetUp}{$B$, $g$, $\widehat{p_0}$}\\
\hspace*{\algorithmicindent}\textbf{input:} $B$ -- block diagonal constraint matrix, as in equation (\ref{eq:vec_tent_constraint}) \\
\hspace*{\algorithmicindent}$\phantom{\mbox{\textbf{input:}}}$ $g$ -- constraint right-hand-side\\
\hspace*{\algorithmicindent}$\phantom{\mbox{\textbf{input:}}}$ $\widehat{p_0}$ -- initial tentative prolongation\\
\hspace*{\algorithmicindent}\textbf{output:} $\Pi_B$ -- projection matrix, constructed block-wise\\
\hspace*{\algorithmicindent}$\phantom{\mbox{\textbf{output:}}}$ $p_0$ -- final tentative prolongation
\ForAll {FINE nodes $i$}
   \State Gather $\localB$, $\row{v}$ and $\rowtent{\ww}$ for row $i$, as in equation (\ref{eqn:Btildestart});
   \State Compute $\row{r} = \row{v} - \localB \rowtent{\ww}$;
   \State FAIL\_QR = false;
   \If {$n_l \ge m_l$}
       \State Compute economy-size QR of $\localB$: $\localB = \localQ R$;
       \If {$\mbox{rank}(R) < m_l$}
           \State Set FAIL\_QR = true;
       \Else
           \State Compute $\row{\delta} = R^{-1} \localQ^T  \row{r}$;
       \EndIf
   \EndIf
   \If {$n_l < m_l$ or FAIL\_QR}
       \State Compute SVD of $\localB$: $\localB = U \Sigma V^T$;
       \State Determine $k = \mbox{rank}(\localB)$ using the diagonal of $\Sigma$;
       \State Set $\localQ = U(:,1:k)$;
       \State Compute $\row{\delta} = V \Sigma^{\dagger} U^T \row{r}$;
   \EndIf
   \State Compute $\rowtent{w} = \rowtent{\ww} + \row{\delta}$ and insert $\rowtent{w}^T$ as row $i$ of $p_0$;
   \State Set $i$-th block of $\Pi_B$ as $I - \localQ \localQ^T$;
\EndFor
\EndProcedure
\end{algorithmic}
\label{EMIN-SetUp}
\end{algorithm}
%


\subsection{Improved stopping criterion and energy monitoring for CG-based energy minimization}

Stopping criteria plays an important role in the overall cost and effectiveness
of energy minimization. Here, we introduce a measure for monitoring energy and 
halting in the algorithm.

Since CG often converges quickly for energy minimization, it is
common to fix the number of iterations in advance~\cite{ManBreVan99,OlsSchTum11}. However, in the case of a
more challenging problem, several iterations may be needed, thus requiring an
accurate stopping criterion. One immediate option is to use the relative
residual, yet this may not be a close indicator of \textit{energy}. In the
following, we analyze CG for a generic $Ax = b$, however our observations extend to
PCG as well.

In the CG algorithm, once the search direction $p_k$ is defined at iteration $k$, the
scalar $\alpha$ is computed:
\begin{equation}\label{eq:CG_a}
\alpha_k = \frac{p_k^T r_k}{p_k^T A p_k},
\end{equation}
in such a way that the new approximation $x_{k+1} = x_k + \alpha_k p_k$ minimizes the square
of the energy norm of the error, i.e.,
\begin{equation}\label{eq:CG_E}
E_k = (x_k - h)^T A (x_k - h),
\end{equation}
where $h = A^{-1}b$ is the true solution. The difference in energy $\Delta E_{k+1} =
E_{k+1} - E_k$ between two successive iterations $k$ and $k+1$ can be computed as
\begin{equation}\label{eq:CG_dE}
\begin{split}
\Delta E_{k+1} &= (x_k + \alpha_k p_k)^T A (x_k + \alpha_k p_k) - 2 b^T (x_k +
\alpha_k p_k) - x_k^T A x_k + 2 b^T x_k \\
&= 2 \alpha_k x_k^T A p_k + \alpha_k^2 p_k^T A p_k - 2\alpha_k b^T p_k = - 2 \alpha_k
r_k^T p_k + \alpha_k^2 p_k^T A p_k\\
&= \alpha_k (-2 r_k^T p_k + \alpha_k p_k^T A p_k) = -\alpha_k (p_k^T r_k) = -\frac{(p_k^T
r_k)^2}{p_k^T A p_k} < 0.
\end{split}
\end{equation}
From~\cref{eq:CG_a} and~\cref{eq:CG_dE}, it is possible to measure, with minimal cost,
the energy decrease provided by the $(k+1)$-st iteration. Indeed, by noting that
$\alpha_k$ is computed as the ratio between the two values $\alpha_{\text{num}} = p_k^T r_k$
and $\alpha_{\text{den}} = p_k^T A p_k$, the energy decrease reads
\begin{equation}\label{eq:CG_dEa}
\Delta E_{k+1} = \frac{\alpha_{\text{num}}^2}{\alpha_{\text{den}}}.
\end{equation}
The relative value of the energy variation with
respect to the initial variation (first iteration) is monitored and convergence is
achieved when energy is sufficiently reduced:
\begin{equation}
\frac{\Delta E_k}{\Delta E_1} \le \tau
\label{relRed}
\end{equation}
for a small user-defined $\tau$.

\subsection{Improved preconditioning for CG-based energy minimization}
\label{sec:precon}


Before introducing PCG, we present some important properties of matrices $K$ and $B$,
that we leverage in the design of effective preconditioners for energy minimization.
In particular, we will see that, thanks to the non-zero structures of $K$ and $B$,
point-wise Jacobi or Gauss-Seidel iterations prove particularly effective
in preconditioning the projected block $\Pi_B K \Pi_B$.

Let us assume that the vector form of prolongation $p$ has been 
obtained by collecting the non-zeroes of $P$ \emph{row-wise}, so that $B$ is block diagonal.
Denoting by $Q$ the matrix collecting an orthonormal basis of $\mbox{range}(B)$, and $Z$
an orthonormal basis of $\mbox{ker}(B)$, by construction we have
\begin{equation}
[Q \;\; Z] [Q \;\; Z]^T = [Q \;\; Z]^T [Q \;\; Z] = I,
\end{equation}
i.e., the matrix $[Q \;\; Z]$ is square and orthogonal. Moreover, since $B$ is block
diagonal, both $Q$ and $Z$ are block diagonal and can be easily computed and stored.
We note that, by construction, each column of $Q$ and $Z$ refers to a specific row
of the prolongation, and, due to the block diagonal pattern chosen for $B$,
also each column of $Q$ and $Z$ is non-zero only in the positions corresponding
to the entries of $p$ collecting the non-zeroes of a specific row of $P$, as is
schematically shown in Fig.~\ref{fig:PBQZ_scheme}. More precisely, using the same notation as
in~\cref{Constr}, let us define $\LL_i$ as the set of indices in $p$ corresponding
to the non-zero entries in the $i$-th row of $P$ so that:
\begin{equation}
p(\LL_i) = P(i,\J_i)
\end{equation}
and define $\J_{B,i}$ and $\J_{Z,i}$ as the set of columns of $B$ and $Z$, referring
to the $i$-th row of $P$. Then,
\begin{equation}
\left.
\begin{array}{l}
B(k,\J_{B,i}) = 0 \\
Q(k,\J_{B,i}) = 0 \\
Z(k,\J_{Z,i}) = 0
\end{array}
\right\} \quad \mbox{if} \quad k \notin \LL_i .
\end{equation}

\begin{figure}[!h]
 \centering
 \includegraphics[width=0.60\textwidth]{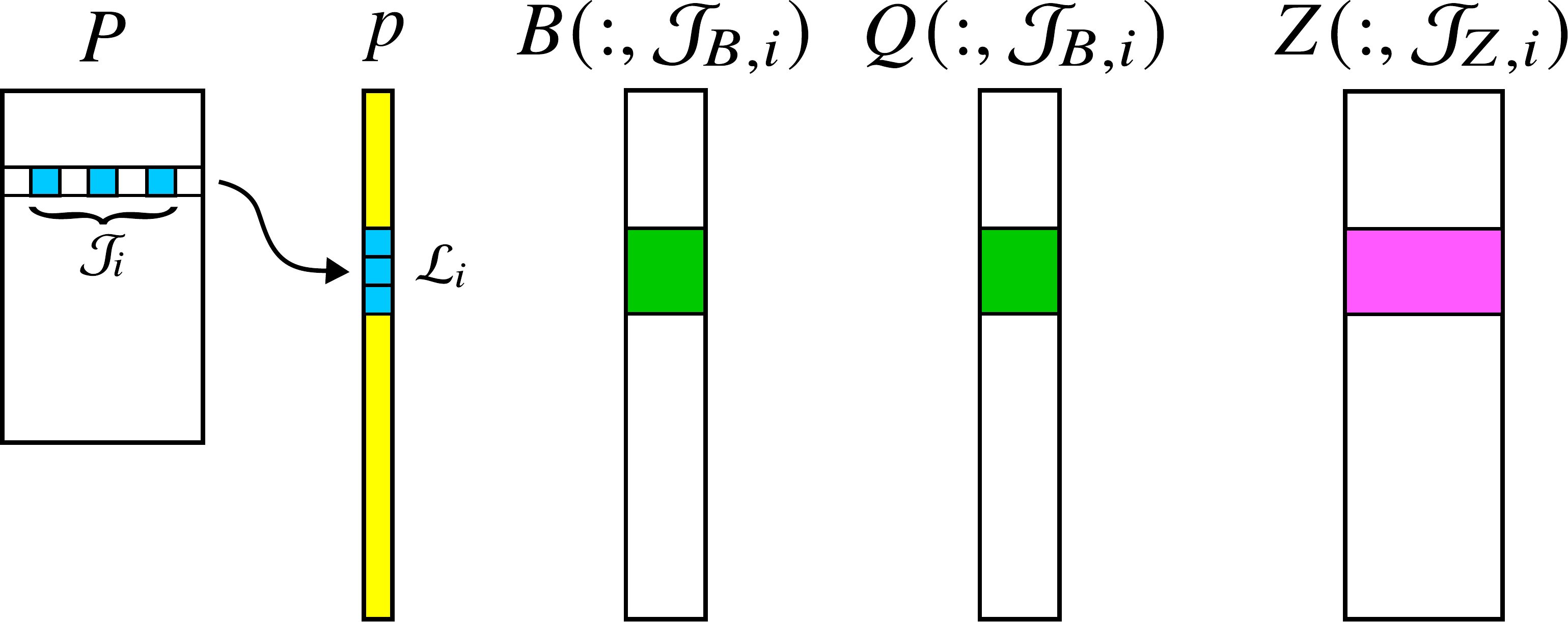}
 \caption{Non-zero pattern of the columns of $B$, $Q$ and $Z$ corresponding to a specific
          row of the prolongation $P$.}
 \label{fig:PBQZ_scheme}
\end{figure}

We are now ready to state two theorems that will be useful in explaining the choice of our
preconditioners.

\begin{theorem}
\label{thr:diagZKZ}
The diagonal of the projected matrix $Z^T K Z$ is equal to the projection of the diagonal
of $K$:
\begin{equation}
\diag(Z^T K Z) = \diag(Z^T D_K Z),
\label{eq:diagZKZ}
\end{equation}
where $D_K$ is the matrix collecting the diagonal entries of $K$. Moreover, $Z^T D_K Z$ is a
diagonal matrix.
\end{theorem}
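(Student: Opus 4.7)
My plan is to exploit the interplay between two different orderings of the vector $p$: when $p$ is enumerated column-wise (with respect to $P$), $K$ is block diagonal with blocks $A(\mathcal{I}_j,\mathcal{I}_j)$; when $p$ is enumerated row-wise (the ordering in force in this subsection), $B$ and therefore $Z$ are block diagonal with column blocks indexed by the rows of $P$. An index $k$ in $p$ corresponds to a specific non-zero $(i_k, j_k)$ of $W$, and two key structural facts follow: $K_{kl}$ can be non-zero only if $j_k = j_l$ (same column block of $K$), while $Z_{k\ell}$ can be non-zero only if $k \in \LL_{i_\ell}$ (same row block of $Z$). These two observations, combined with the fact that the diagonal of each block of $K$ is a sub-diagonal of $A_{ff}$, drive both parts of the theorem.

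For the identity $\diag(Z^T K Z) = \diag(Z^T D_K Z)$, I fix a column $z_\ell$ of $Z$ supported in some $\LL_i$ and expand $z_\ell^T K z_\ell = \sum_{k, m \in \LL_i} Z_{k\ell} K_{km} Z_{m\ell}$. For $k \neq m$ in $\LL_i$ both entries correspond to the same row $i$ of $W$ but necessarily to different columns ($j_k \neq j_m$), so they sit in different diagonal blocks of $K$ and $K_{km} = 0$. Only the terms $k = m$ survive, which is exactly $z_\ell^T D_K z_\ell$.

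For the diagonality of $Z^T D_K Z$, the crucial observation is that for every $k \in \LL_i$ the diagonal entry $(D_K)_{kk}$ equals $A_{i i}$, the \emph{same} value independent of $k$, because the diagonals of the blocks $A(\mathcal{I}_j,\mathcal{I}_j)$ are restrictions of the diagonal of $A_{ff}$. Thus $D_K$ restricted to $\LL_i \times \LL_i$ acts as the scalar $A_{ii}\, I$. For two columns $z_\ell, z_{\ell'}$ of $Z$ both supported in $\LL_i$, we get $z_\ell^T D_K z_{\ell'} = A_{ii}\, z_\ell^T z_{\ell'} = 0$ by orthonormality of the local basis of $\ker(\localB)$; for columns supported in different row-blocks, the supports are disjoint and the inner product vanishes trivially. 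The main conceptual hurdle is simply recognizing the ``two orderings, two block structures'' pattern and the constancy of $D_K$ along each $\LL_i$; once those are in place, the proof is pure bookkeeping.
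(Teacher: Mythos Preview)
Your proof is correct and follows essentially the same route as the paper's: both arguments hinge on the observation that for $k\neq m$ in a common $\LL_i$ one has $K_{km}=0$ (since the two entries belong to different column-blocks of $K$), that $K_{kk}=A(i,i)$ for every $k\in\LL_i$, and that the columns of $Z$ are orthonormal and block-supported. The only cosmetic difference is that the paper computes the full diagonal block $H_i=Z(:,\J_{Z,i})^T K\,Z(:,\J_{Z,i})$ at once and observes it equals $A(i,i)I$, whereas you split the argument into ``diagonal entries agree'' and ``$Z^T D_K Z$ is diagonal''; the underlying ideas are identical.
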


\begin{proof}
Let us consider the block of columns of $Z$ relative to row $i$, that is $Z(:,J_{Z,i})$,
remembering that it is non zero only for the row indices in $\LL_i$. As a consequence,
the square block $H_i$ obtained by pre- and post-multiplying $K$ by $Z(:,J_{Z,i})$ is
computed as:
\begin{equation}
H_i(j,k) = \sum_{r \in \LL_i} \left( \sum_{s \in \LL_i} Z(s,k) K(r,s) \right) Z(r,j)
\quad \mbox{for} \quad j,k \in \J_{Z,i} .
\end{equation}
However, $K(r,s)$ for $r,s \in \LL_i$ represents the connection between $P(i,j_r)$
and $P(i,j_s)$, which is non-zero only for $j_r = j_s$,~i.e., $r = s$, because in $K$
there is no connection between different columns of $P$. Moreover, due to~\cref{minblk},
$K(r,r) = A(i,i)$ for every $r \in \LL_i$. As the columns of $Z$ are orthonormal by
construction, $Z^T Z = I$, it immediately follows that the square block $H_i$ is diagonal
with all its non zero entry equal to $A(i,i)$. The fact that $Z^T D_K Z$ is a diagonal
matrix follows from the above observation that
$K(\LL_i,\LL_i) = D_K(\LL_i,\LL_i) = A(i,i) I_m$, with $I_m$ the identity matrix having
size $m$ equal to the cardinality of $\LL_i$.
\end{proof}

\begin{corollary}
\label{thr:nullZKQ}
The product $Z^T D_K Q$, where $D_K$ is defined as in Theorem \ref{thr:diagZKZ}, is
equal to the null matrix:
\begin{equation}
Z^T D_K Q = 0.
\label{eq:nullZKQ}
\end{equation}
\end{corollary}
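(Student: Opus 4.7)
The plan is to mimic closely the structure of the proof of Theorem \ref{thr:diagZKZ} and exploit the by-row block structure of $B$, $Q$, and $Z$, together with the key observation that $D_K$ restricted to the index set $\mathcal{L}_i$ is simply a scalar multiple of the identity. First I would pick an arbitrary entry $(Z^T D_K Q)(j,k)$ where $j \in \mathcal{J}_{Z,i}$ for some row $i$ and $k \in \mathcal{J}_{B,l}$ for some row $l$ of $P$. By the support property depicted in Fig.~\ref{fig:PBQZ_scheme}, the column $Z(:,j)$ is supported in $\mathcal{L}_i$ and $Q(:,k)$ is supported in $\mathcal{L}_l$, so the sum defining $(Z^T D_K Q)(j,k)$ reduces to a sum over $\mathcal{L}_i \cap \mathcal{L}_l$.

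Next, I would argue that the contribution vanishes unless $i = l$. Indeed, since each row of $P$ contributes a disjoint set of entries of the column-vector $p$, the index sets $\mathcal{L}_i$ and $\mathcal{L}_l$ are disjoint when $i \neq l$, so $Z(r,j) D_K(r,r) Q(r,k) = 0$ for every $r$. For the remaining case $i = l$, I would invoke the identity established inside the proof of Theorem \ref{thr:diagZKZ}, namely that $K(\mathcal{L}_i, \mathcal{L}_i) = A(i,i) I_m$, so in particular $D_K(r,r) = A(i,i)$ for every $r \in \mathcal{L}_i$. This scalar factors out of the sum, yielding
\begin{equation}
(Z^T D_K Q)(j,k) = A(i,i) \sum_{r \in \mathcal{L}_i} Z(r,j)\, Q(r,k) = A(i,i)\, \bigl( Z(:,j)^T Q(:,k) \bigr).
\end{equation}

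Finally, since $Q$ is an orthonormal basis of $\mathrm{range}(B)$ and $Z$ is an orthonormal basis of $\mathrm{ker}(B^T)$, these two subspaces are orthogonal complements in $\mathbb{R}^n$, so $Z^T Q = 0$ globally, and in particular the inner product $Z(:,j)^T Q(:,k)$ vanishes. Hence every entry of $Z^T D_K Q$ is zero. I do not anticipate a real obstacle here: the claim is essentially a direct corollary, and the only thing to be careful about is to spell out why $D_K$ behaves as a scalar on each block $\mathcal{L}_i$, which we get for free from the argument already used for Theorem \ref{thr:diagZKZ}.
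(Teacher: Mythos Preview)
Your proposal is correct and follows essentially the same approach as the paper's proof: both use the block-diagonal structure of $Q$ and $Z$ to kill off-diagonal blocks via disjoint supports $\LL_i \cap \LL_l = \emptyset$, then on each diagonal block invoke $D_K(\LL_i,\LL_i) = A(i,i) I_m$ to factor out the scalar and conclude from $Z(:,\J_{Z,i}) \perp Q(:,\J_{B,i})$. Your write-up is simply a more explicit, entry-by-entry version of the paper's two-sentence argument.
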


\begin{proof}
Due to the block-diagonal structure of $Q$ and $Z$, all the off-diagonal blocks
   of $Z^T D_K Q$ are empty. Then, equation~\cref{eq:nullZKQ} follows from $D_K(\LL_i,\LL_i) = A(i,i) I_m$
and $Z(:,\J_i) \perp Q(:,\J_i)$.
\end{proof}

\subsubsection{Preconditioned CG-based Energy Minimization}

\begin{algorithm}[t!]
\caption{\bf Preconditioned Conjugate Gradients for Energy Minimization}
\begin{algorithmic}[1]
\Procedure{EMIN\_PCG}{maxit, $\tau$, $K$, $f$, $\Pi_B$, $M$, $P_0$}\\
\hspace*{\algorithmicindent}\textbf{input:} maxit -- maximum iterations \\
\hspace*{\algorithmicindent}$\phantom{\mbox{\textbf{input:}}}$ $\tau$ -- energy convergence tolerance\\
\hspace*{\algorithmicindent}$\phantom{\mbox{\textbf{input:}}}$ $K$ -- system matrix (applied matrix-free with $A$) \\
\hspace*{\algorithmicindent}$\phantom{\mbox{\textbf{input:}}}$ $f$ -- right-hand-side $f$ from equation \ref{11blk} \\
\hspace*{\algorithmicindent}$\phantom{\mbox{\textbf{input:}}}$ $\Pi_B$ -- projection matrix\\
\hspace*{\algorithmicindent}$\phantom{\mbox{\textbf{input:}}}$ $M$ -- preconditioner\\
\hspace*{\algorithmicindent}$\phantom{\mbox{\textbf{input:}}}$ $P_0$ -- tentative prolongation\\
\hspace*{\algorithmicindent}\textbf{output:} $P$ -- final prolongation

\State Extract global weight vector $w_0$ row-wise from $P_0$
\State $\Delta w = 0$;
\State $r = f - \Pi_B K w_0$;
\For {$k = 1,\dots,$maxit}
  \State $z = \Pi_B M^{-1} r$\label{algo:PCG:res};
  \State $\gamma = r^T z$;
  \If{$i = 1$}
    \State $y = z$;
  \Else
    \State $\beta = {\gamma}/{\gamma_{old}}$;
    \State $y = z + \beta y$;
  \EndIf
  \State $\gamma_{old} = \gamma$;
  \State $\breve{y} = \Pi_B K y$;
  \State $\alpha = \gamma / (y^T \breve{y}$);
  \State $\Delta E_k = \gamma \alpha$
  \State \textbf{if} {$\Delta E_k < \tau \Delta E_1$} {\Return}
  \State $\Delta w = \Delta w + \alpha y$;
  \State $r = r - \alpha \breve{y}$;
\EndFor
\State $w = w_0 + \Delta w$;
\State Form final prolongation $P = [W; I]$ with global weight vector $w$
\EndProcedure
\end{algorithmic}
\label{algo:PCG}
\end{algorithm}
Preconditioning CG can greatly improve convergence, but special care should be
taken to maintain the search direction $y$ in the space of vectors satisfying the
near kernel constraint. In other words, $y$ must satisfy $\Pi_B y \equiv y$.
In~\cite{OlsSchTum11}, a Jacobi preconditioner is adopted that satisfies this requirement,
but, due to the special properties of the matrix $K$, it is possible to compute a more
effective preconditioner. Denoting by $M^{-1}$ any approximation of $K^{-1}$,
we use $\Pi_B M^{-1} \Pi_B$ to precondition $\Pi_B K \Pi_B$ in order to guarantee
the constraint. The resulting PCG algorithm is outlined in~\cref{algo:PCG}, where,
since $\Pi_B$ is a projection, we can avoid premultiplying $r$ by $\Pi_B$
(line \ref{algo:PCG:res}) as $r$ already satisfies the constraint.

In the remainder of this section we focus our attention on $Z^T K Z$ instead of
$\Pi_B K \Pi_B$, because, as $\Pi_B = I - Q Q^T = Z Z^T$, they have the same spectrum.
Our aim is to find a good preconditioner for $Z^T K Z$. Unfortunately,
although $K$ is block diagonal and several effective preconditioners can be easily built
for it, $Z^T K Z$ is less manageable and further approximations are needed.

By pre- and post-multiplying $K$ by $[Z\; Q]^T$ and $[Z\; Q]$, respectively, we can write
the following $2 \times 2$ block expression:
\begin{equation}
[Z\; Q]^T K [Z\; Q] =
\begin{bmatrix}
Z^T K Z & Z^T K Q \\
Q^T K Z & Q^T K Q \\
\end{bmatrix}\!,
\end{equation}
and, since we are interested in the inverse of $Z^T K Z$, we can express it as the Schur
complement of the leading block of the inverse of $[Z\; Q]^T K [Z\; Q]$
\cite[Chapter~3]{vassilevski2008multilevel3}:
\begin{equation}
([Z\; Q]^T K [Z\; Q])^{-1} = [Z\; Q]^T K^{-1} [Z\; Q] =
\begin{bmatrix}
Z^T K^{-1} Z & Z^T K^{-1} Q \\
Q^T K^{-1} Z & Q^T K^{-1} Q \\
\end{bmatrix}\!,
\end{equation}
from which it follows that:
\begin{equation}
(Z^T K Z)^{-1} = Z^T K^{-1} Z - Z^T K^{-1} Q (Q^T K^{-1} Q)^{-1} Q^T K^{-1} Z.
\label{eq:invSch}
\end{equation}
When $K^{-1}$ is approximated with the inverse of the diagonal of $K$, $M_J = \diag(K)$,
because of Theorem~\ref{thr:nullZKQ}, we have that $Z^T M_J^{-1} Q = 0$, and the expression
\eqref{eq:invSch} becomes:
\begin{equation}
(Z^T K Z)^{-1} \simeq M_1^{-1} = Z^T M_J^{-1} Z,
\end{equation}
which corresponds to a Jacobi preconditioning of $Z^T K Z$.

We highlight that only for Jacobi can the post-multiplication by $\Pi_B$
be neglected in line \ref{algo:PCG:res}, since $M_J^{-1}$ does not introduce components along $\mbox{range}(Q)$.
This is consistent with~\cite{OlsSchTum11}, where the Jacobi
preconditioner is used, and no post-multiplication by $\Pi_B$ is adopted.

If a more accurate preconditioner is needed, $K^{-1}$ can be approximated using a
block-wise symmetric Gauss-Seidel (SGS) iteration, that is:
\begin{equation}
K^{-1} \simeq M^{-1}_{SGS} = (L+D)^{-T} D (L+D)^{-1} ,
\end{equation}
which substituted into equation~\eqref{eq:invSch} reads:
\begin{equation}
(Z^T K Z)^{-1} \simeq M_2^{-1} = Z^T M^{-1}_{SGS} Z -
Z^T M^{-1}_{SGS} Q (Q^T M^{-1}_{SGS} Q)^{-1} Q^T M^{-1}_{SGS} Z.
\label{eq:SGS_prec}
\end{equation}
Since the application of \eqref{eq:SGS_prec} is still impractical due to the presence of
the term $(Q^T M^{-1}_{SGS} Q)^{-1}$, we neglect the second member of the right-hand side,
based on the heuristic that $Z^T M^{-1}_{SGS} Q$ should be small, because $Z^T M_J^{-1}
Q = 0$. After this simplification, we obtain the final expression of the projected SGS
preconditioner:
\begin{equation}
K^{-1} \simeq M_2^{-1} = Z^T M^{-1}_{SGS} Z = Z^T (L+D)^{-T} D (L+D)^{-1} Z.
\end{equation}
Note that while $M_1^{-1}$ is exactly the Jacobi preconditioner of
$Z^T K Z$, $M_2^{-1}$ is only an approximation of the exact SGS preconditioner of $Z^T K Z$. However, we
will show in the numerical results that it is able to significantly accelerate
convergence.

\section{Efficient parallel implementation}


Energy minimization has historically been considered computationally expensive for AMG setup and
real applications. Nevertheless, a cost effective implementation is still possible,
but requires special care 
with the algorithm parallelization.
In this work, we build our AMG preconditioner with Chronos
\cite{IsoFriSpiJan21,CHRONOS-webpage}, which provides the standard numerical kernels
usually required in a distributed memory AMG implementation such as the communication of
ghost unknowns, coarse grid construction (e.g., computing the PMIS C-F partition \cite{de2006reducing}), sparse matrix-vector, and sparse matrix-matrix
product, etc. For energy minimization, however, we developed three
specific kernels that are not required in other AMG approaches, but are critical for an
efficient parallel implementation here, i.e., the
sparse product between $K$ and $p$, application of the projection $\Pi_B$, and
symmetric Gauss-Seidel with $K$. We do not list Jacobi preconditioning,
because it simply consists of a row-wise scaling of $P$.

The first issue related to the product by $K$ is that in practical applications $K$ cannot
be stored. In fact, if we consider a prolongation $P$ having $r$ non-zeroes per row
on average, the number of non-zeroes per column will be approximately
$s \simeq \frac{n_f}{n_c} r$ and the $K$ matrix would be of size $n_c s \times n_c s$
with about $n_c s \, t \simeq n_f r \, t$ non-zeroes to be stored, where $t$ is the average
number of non-zeroes per row of $A$. Often, $r \simeq t$ and storing $K$ becomes several times
more expensive than storing $A$. For instance in practical elasticity problems, $K$ can
be up to 20 times larger than $A$, making it unavoidable to proceed in
matrix-free mode for $K$. Fortunately, the special structure of $K$ allows us to interpret
the product $K p$ as a sparse matrix-matrix (SpMM) product between $A$ and $P$, but with a fixed,
prescribed pattern on $P$. This property can be easily derived from the definition of
$K$ \cref{11blk} and the vector form of the prolongation $p$.
One advantage of prescribing a fixed pattern for $P$ is that the amount of data exchanged
in SpMM is greatly reduced. First of all, the sparsity pattern adjacency information of $P$ can be
communicated only once before entering PCG, then for all the successive $A P$ products
only the entries of $P$ are exchanged. Moreover, all the buffers to receive and
send messages through the network, can be allocated and set-up only once at the
beginning and removed at the end of the minimization process.  In practice, we find that
these optimizations reduce the cost of SpMM by about 50\%.

By contrast, the construction and application of $\Pi_B$ does not require any communication. In fact,
we distribute the prolongation row-wise among processes, and since $B$ is
block diagonal, with each block corresponding to a row of $P$ as shown in \eqref{Constr_Mat},
we distribute $B$ blocks to the process owning the respective row of $P$. Following this scheme, each block $\localB$ of $B$ is
factorized independently to obtain $Q$ which is efficiently processed by the LAPACK routine DGEMV
when applying $\Pi_B$.

Finally, we emphasize that there is no parallel bottleneck when parallelizing the
application of symmetric Gauss-Seidel with $K$.  This is because $K$ is block diagonal and,
at least in principle, each block can be assigned to a different process. However
as in the $K p$ product, the main issue is the large size of $K$ which prevents
explicit storage. As above, we rely on the equivalence between the $K p$
product and the $A P$ product with a prescribed sparsity pattern. The symmetric Gauss-Seidel step
is then performed matrix-free, similar to the $A P$ product, by saving again a considerable
amount of data exchange because it is not necessary to communicate adjacency
information or reallocate communication buffers. As expected, the symmetric Gauss-Seidel application exhibits
a computational cost comparable to that of the $A P$ product.

\section{Numerical experiments}
\label{sec:numerics}

In this section, we investigate the improved convergence offered by energy minimization
and the resulting computational performance and parallel efficiency for real-world
applications. This numerical section is divided into three parts: a detailed analysis
on the prolongation energy reduction and related AMG convergence for two
medium-size matrices, a weak scalability test for an elasticity problem on a uniformly
refined cube, and a comparison study for a set of large real world matrices
representing fluid dynamics and mechanical problems.

As a reference for the proposed approach, we compare
with the well-known and open source solver GAMG \cite{balay2019petsc}, a smoothed aggregation-based
AMG code from PETSc.  In both cases, Chronos and GAMG are used with preconditioned conjugate gradients (PCG).
The GAMG set-up is tuned for each problem starting from its default parameters, as suggested by
the user guide \cite{PetscGuide}. As smoother we have chosen the most powerfull option available in Chronos and GAMG, that is FSAI and Chebyshev accelerated Jacobi, respectively.
Each time such default parameters are modified, we report the chosen values in the relevant section. Since all of the experiments are run in parallel,
the system matrices are partitioned with ParMETIS~\cite{parmetis} before the
AMG preconditioner set-up to reduce communication overhead.

All numerical experiments have been run on the Marconi100 supercomputer located in the
Italian consortium for supercomputing (CINECA). Marconi100 consists of 980 nodes based on
the IBM Power9 architecture, each equipped with two 16-core IBM POWER9 AC922 @3.1 GHz
processors. For each test, the number of nodes, $N$, is selected so that each node has approximately
640,000,000 nonzero entries, and, consequently, the number of nodes is problem dependent.
Each node is always fully exploited by using 32 MPI tasks, i.e., each task (core) has an average load of
20,000,000 nonzero entries. The number of cores is denoted $n_{cr}$. Only during the
smaller cases in the weak scalability analysis are nodes partially used (i.e., with
less than 32 MPI tasks). Even though Chronos can exploit hybrid MPI-OpenMP parallelism, for
the sake of comparison, it is convenient to use
just one thread, i.e., pure MPI parallelism. Moreover, for such a high load per core, we do not find that
fine-grained OpenMP parallelism is of much help.

The numerical results are presented in terms of total number of computational cores used,
$n_{cr}$, the grid and operator complexities, $C_{gd}$ and $C_{op}$, respectively, the
number of iterations, $n_{it}$, and the setup, iteration, and total times, $T_p$, $T_s$,
and $T_t$ = $T_p$ + $T_s$, respectively. For all the test cases, the right-hand side is a
unit vector. The linear systems are solved with PCG and
a zero initial guess, and convergence is achieved when the $\ell_2$-norm of the
iterative residual drops below 8 orders of magnitude with respect to the $\ell_2$-norm of the
right-hand side.

\subsection{Analysis of the energy minimization process}

We use two matrices for studying prolongation energy reduction, \texttt{Cube}
and \texttt{Pflow742} \cite{paludetto2019novel}. While the former is quite simple, as it
is the fourth refinement level of the linear elasticity cube used in the weak scalability
study, the latter arises from a 3D simulation of the pressure-temperature field in a
multilayered porous medium discretized by hexahedral finite elements. The main source of
ill-conditioning here is the large contrast in the material properties for different layers.
The dimensions of \texttt{Cube} are 1,778,112 rows and 78,499,998 entries, with 44.15
entries per row on average.
The size of \texttt{Pflow742} is 742,793 rows and 37,138,461 entries, for an average
entry-per-row ratio of 50.00.

The overall solver performance is compared against the energy reduction for the fine-level $P$ when
using the energy mininimization Algorithm \ref{algo:PCG} with either Jacobi and Gauss-Seidel as the preconditioner. Results are
analyzed in terms of computational costs and times.
The main algorithmic features we want to analyze are:
\begin{itemize}
  \item how the prolongation energy reduction affects AMG convergence; and
  \item the effectiveness of the preconditioner, i.e., Jacobi or Gauss-Seidel.
\end{itemize}
The energy minimization iteration count (\cref{algo:PCG}) is denoted by $n_{it}^E$.
Between brackets, we also report the relative energy reduction that is used to monitor
the restricted PCG convergence of Algorithm \ref{algo:PCG}, as shown in equation~(\ref{relRed}).
$T_i$ is the time spent to improve the prolongation, with either classical prolongation smoothing with weighted-Jacobi (SMOOTHED) or the energy minimization process. Note that $T_i$ is only part of $T_p$.

Table \ref{tab:sensCube} shows the results for \texttt{Cube}. First, it can be seen
that the energy minimization algorithm produces prolongation operators which lead to somewhat lower
complexities than for the smoothed case. Moreover, energy minimization builds more effective
prolongation operators overall, since the global iteration count ($n_{it}$) is lower. As the
energy minimization iteration count increases, we can observe that $n_{it}$ decreases, while
the setup time ($T_p$) increases. For this simple problem, the optimal point in terms of total
time ($T_t$) is reached using 2 iterations of Jacobi (EMIN-J). Fig. \ref{fig:enerCube} further shows that
2 iterations of Jacobi (EMIN-J) already reaches close to the
achievable energy minimum. Fig. \ref{fig:enerCubeT} compares the cost in wall-clock seconds for each energy minimization
iteration using different preconditioners. For this case and implementation, Jacobi is more efficient.

\begin{table}
  \centering
  \small
  \caption{Analysis of energy minimization for the \texttt{Cube} problem.}
  \label{tab:sensCube}
  \begin{tabular}{lrrrrrrrrr}
    \toprule
    Prolongation & $n_{it}^E$ & $\frac{\Delta E_k}{\Delta E_1}$ & $C_{gr}$ & $C_{op}$ & $n_{it}$ & $T_p$ [s] & $T_s$ [s] &
$T_t$ [s] & $T_i$ [s]\\
    \midrule
    SMOOTHED &-& - & 1.075 & 1.648 & 58 & 52.1 & 13.4 & 65.5 & 6.3 \\
    EMIN-J & 1 & $10^{0}$ & 1.075 & 1.592 & 54 & 47.6 & 11.2 & 58.8 & 11.3 \\
    EMIN-J & 2 & $2 \cdot 10^{-1}$ & 1.075 & 1.589 & 32 & 50.9 & 6.7 & 57.6 & 14.3 \\
    EMIN-J & 4 & $10^{-2}$ & 1.075 & 1.589 & 27 & 57.3 & 5.7 & 63.0 & 20.0 \\
    EMIN-GS & 1 & $10^{0}$ & 1.075 & 1.587 & 28 & 55.5 & 6.0 & 61.5 & 19.3 \\
    EMIN-GS & 2 & $2 \cdot 10^{-2}$ & 1.075 & 1.588 & 26 & 65.6 & 5.5 & 71.1 & 28.8 \\
    EMIN-GS & 4 & $1 \cdot 10^{-4}$ & 1.075 & 1.589 & 26 & 84.4 & 5.6 & 90.0 & 47.7 \\
    \bottomrule
  \end{tabular}
\end{table}

\begin{figure}
  \centering
  \null\hfill
  \subfloat[\texttt{Cube} case\label{fig:enerCube}]
    {\includegraphics[width=0.45\linewidth]{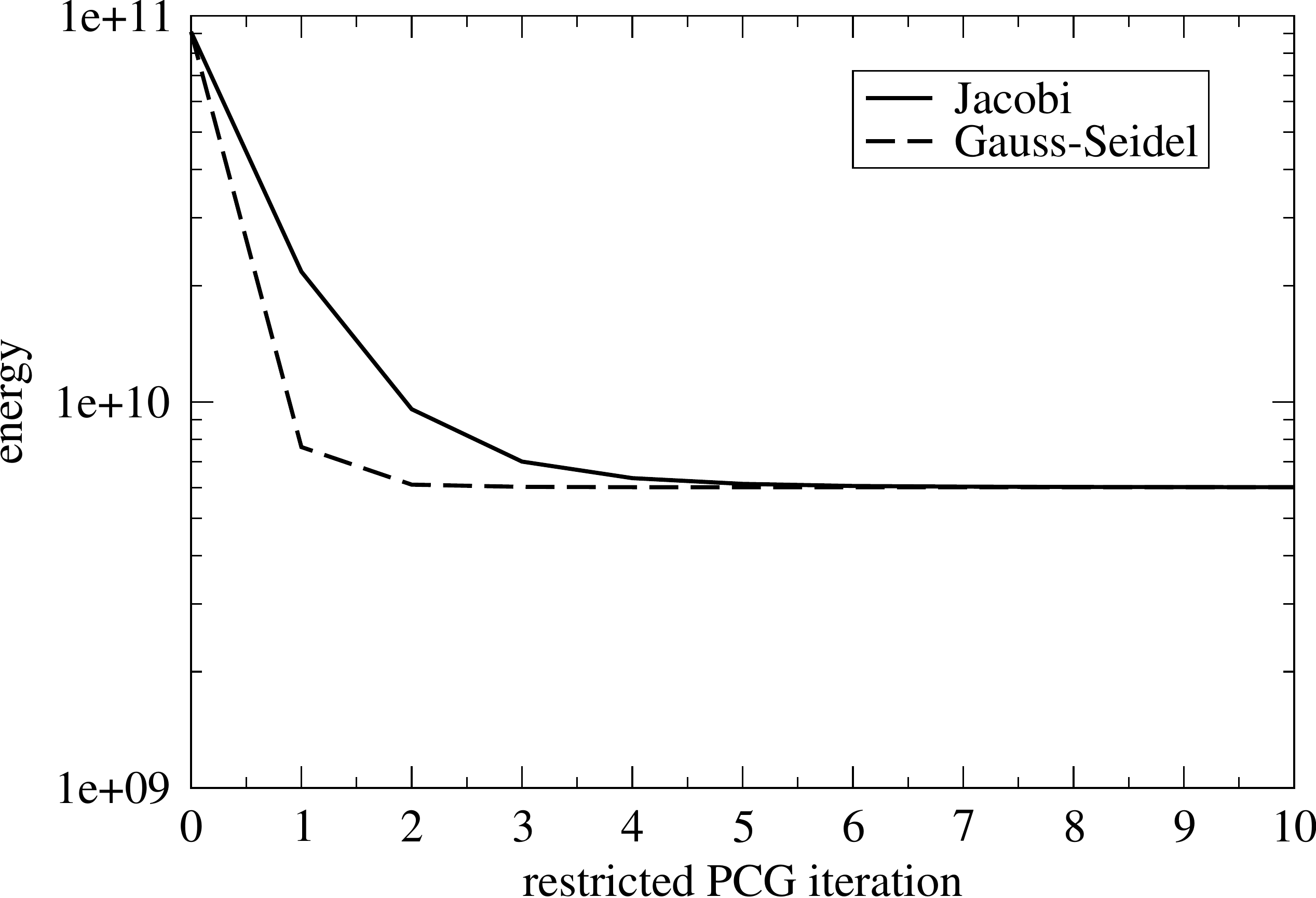}}\hfill
  \subfloat[\texttt{Pflow742} case\label{fig:enerFlow}]
    {\includegraphics[width=0.45\linewidth]{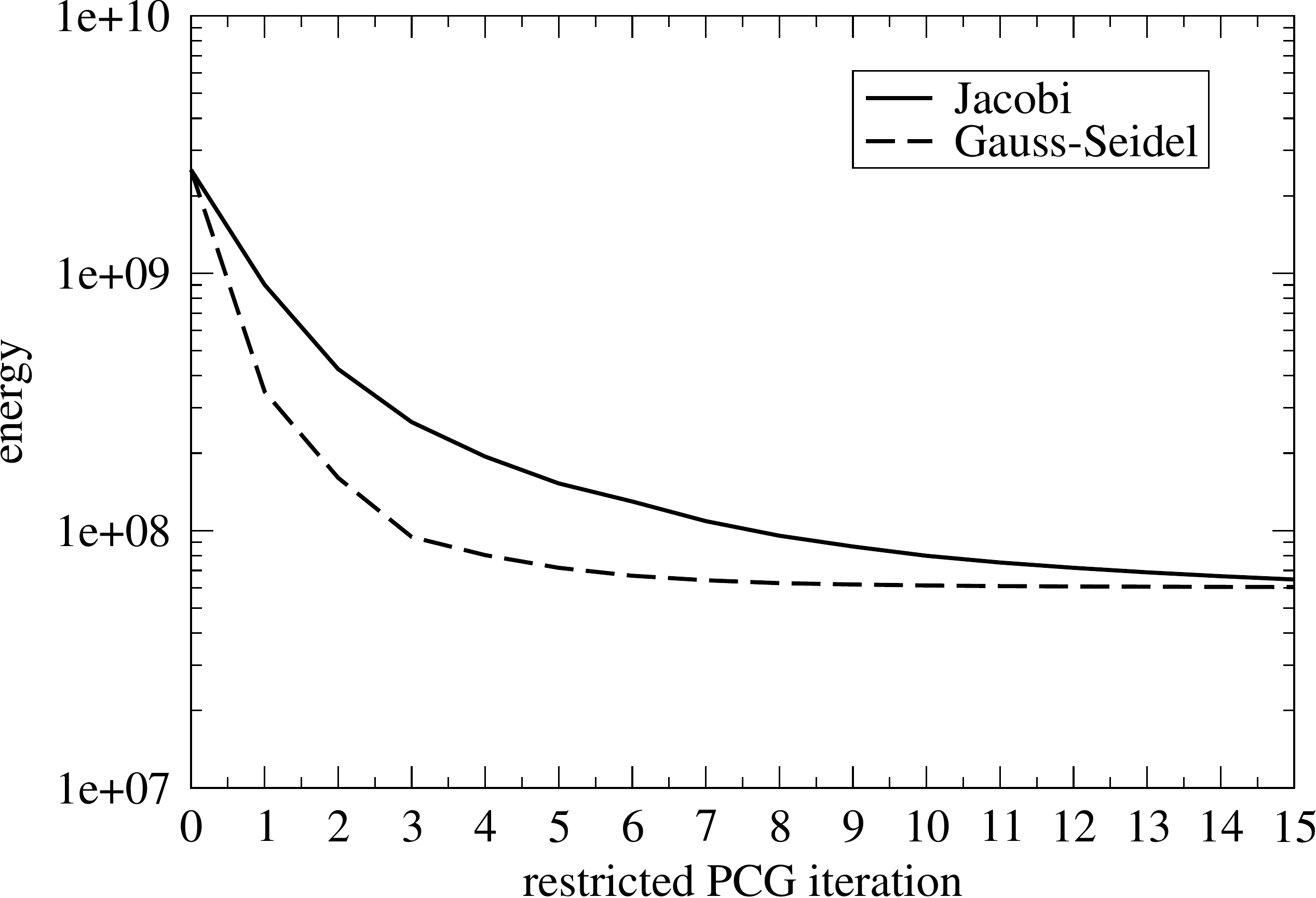}}
  \hfill\null
  \caption{Energy reduction vs. energy minimization iterations with Jacobi and Gauss-Seidel.}
  \label{fig:enerIter}
\end{figure}

\begin{figure}
  \centering
  \null\hfill
  \subfloat[\texttt{Cube} case\label{fig:enerCubeT}]
    {\includegraphics[width=0.45\linewidth]{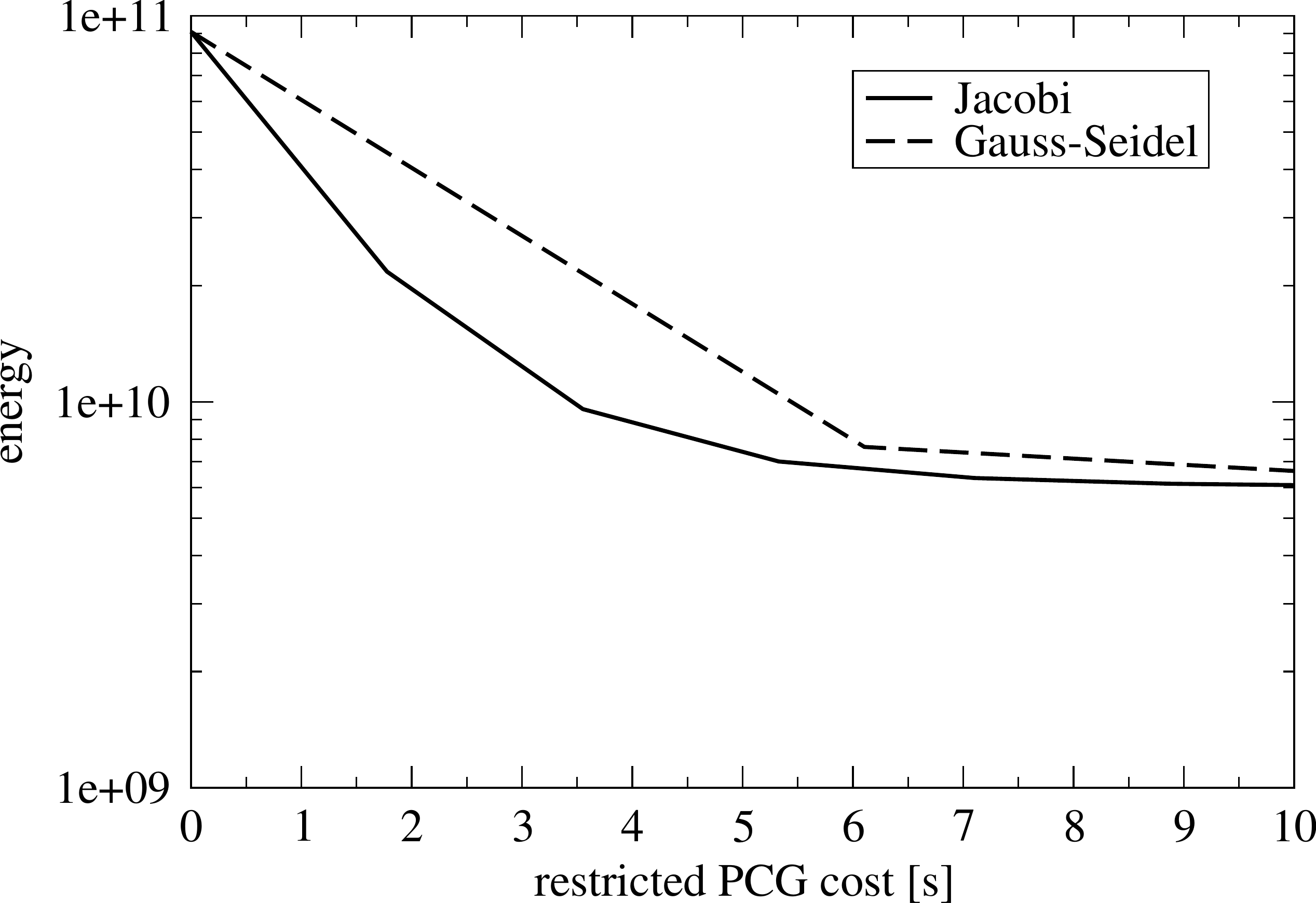}}\hfill
  \subfloat[\texttt{Pflow742} case\label{fig:enerFlowT}]
    {\includegraphics[width=0.45\linewidth]{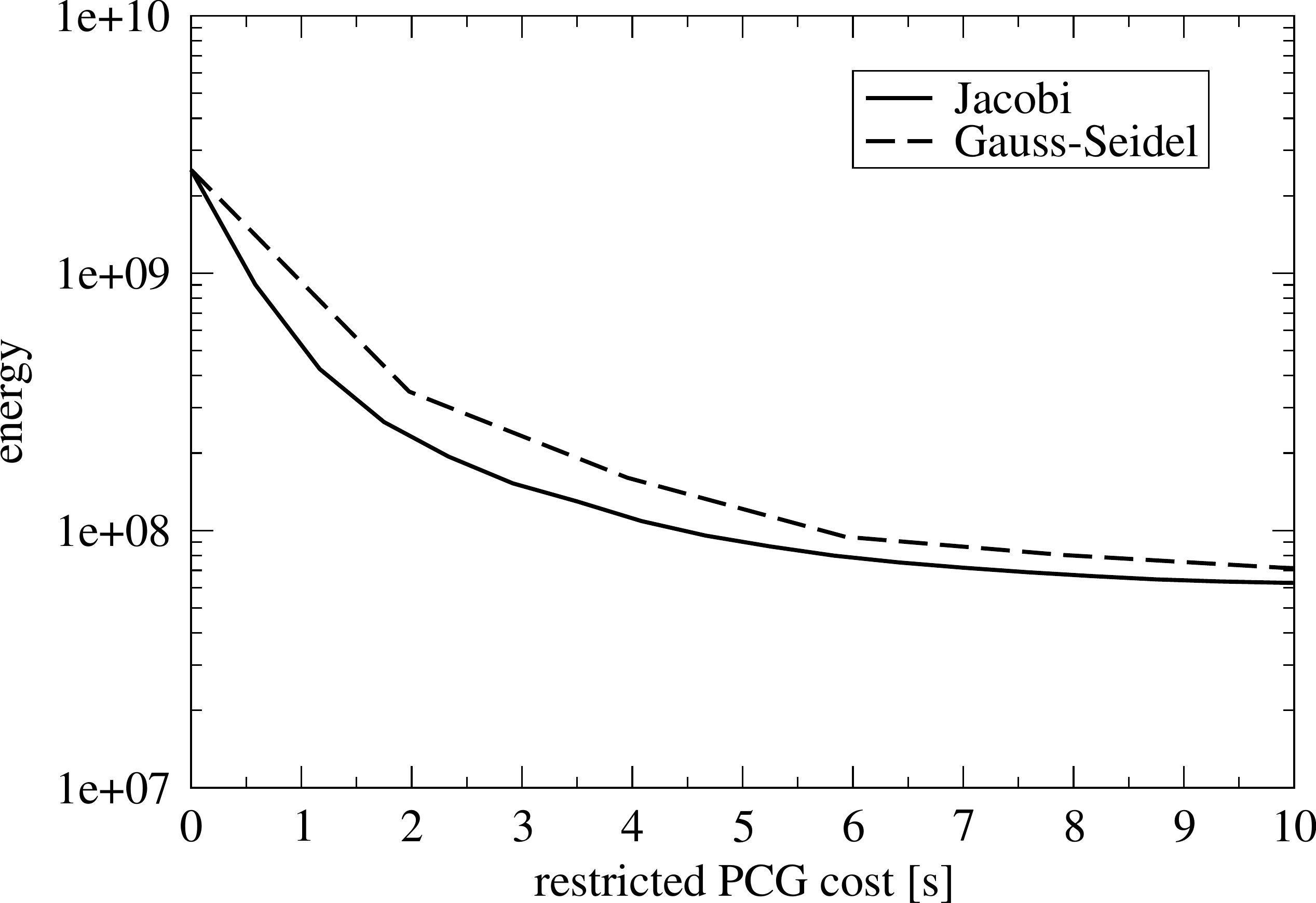}}
  \hfill\null
  \caption{Energy reduction vs. computational cost for energy minimization preconditioned with Jacobi and
           Gauss-Seidel.}
  \label{fig:enerTime}
\end{figure}

Similar conclusions can be drawn for the \texttt{Pflow742} case, as $n_{it}$
monotonically decreases as the energy associated with the prolongation operator is reduced.
As reported by Table \ref{tab:sensFlow}, the optimal total time ($T_t$) is obtained
with 4 Jacobi iterations (EMIN-J). Fig. \ref{fig:enerFlow} shows how the energy of the prolongation operator decreases slower
than for \texttt{Cube} and that Jacobi converges significantly slower than Gauss-Seidel.
However as reported by Fig.  \ref{fig:enerFlowT}, the cost of Gauss-Seidel is still more than that of Jacobi,
although the performance difference is smaller than for \texttt{Cube}.

\begin{table}
  \centering
  \small
  \caption{Analysis of energy minimization for the \texttt{Pflow742} problem.}
  \label{tab:sensFlow}
  \begin{tabular}{lrrrrrrrrr}
    \toprule
    Prolongation & $n_{it}^E$ & $\frac{\Delta E_k}{\Delta E_1}$ & $C_{gr}$ & $C_{op}$ & $n_{it}$ & $T_p$ [s] & $T_s$ [s] &
$T_t$ [s] & $T_i$ [s]\\
    \midrule
    SMOOTHED &-& - & 1.061 & 1.465 & 369 & 23.0 & 29.8 & 52.8 & 2.3 \\
    EMIN-J & 1 & $10^{0}$ & 1.061 & 1.339 & 377 & 20.4 & 27.3 & 47.7 & 2.9 \\
    EMIN-J & 2 & $3 \cdot 10^{-1}$ & 1.061 & 1.344 & 270 & 21.5 & 19.6 & 41.1 & 3.7 \\
    EMIN-J & 4 & $4 \cdot 10^{-2}$ & 1.062 & 1.352 & 219 & 23.3 & 15.7 & 39.0 & 5.4 \\
    EMIN-J & 8 & $8 \cdot 10^{-3}$ & 1.062 & 1.360 & 189 & 26.2 & 13.8 & 40.0 & 8.1 \\
    EMIN-GS & 1 & $10^{0}$ & 1.061 & 1.346 & 276 & 23.0 & 19.9 & 42.9 & 5.3 \\
    EMIN-GS & 2 & $9 \cdot 10^{-2}$ & 1.062 & 1.352 & 221 & 26.2 & 16.0 & 42.2 & 8.2 \\
    EMIN-GS & 4 & $6 \cdot 10^{-3}$ & 1.062 & 1.363 & 184 & 31.7 & 14.3 & 46.0 & 13.8 \\
    EMIN-GS & 8 & $7 \cdot 10^{-4}$ & 1.063 & 1.367 & 183 & 42.9 & 13.8 & 56.7 & 24.8 \\
    \bottomrule
  \end{tabular}
\end{table}

Regarding algorithmic complexity, each energy minimization iteration with Gauss-Seidel
should cost exactly twice an iteration with Jacobi, and from the above
tests, Gauss-Seidel is able to reduce the energy more than twice as fast as Jacobi.
Thus Gauss-Seidel should be cheaper than Jacobi. Unfortunately, this is not
confirmed by our numerical experiments and this is likely due to a sub-optimal parallel implementation of the Gauss-Seidel preconditioner.
Although the block diagonal structure of $K$ allows theoretically for a perfectly
parallel implementation, the Gauss-Seidel implementation
requires more communication and synchronization stages than Jacobi, likely leading to our results where the Jacobi preconditioner is faster. A more cost effective
implementation of Gauss-Seidel will be the focus of future work.  For now, Jacobi is chosen
as our default preconditioner and will be used in all subsequent cases.

Finally, we observe that a relative energy reduction of one order of magnitude gives generally
the best trade-off between set-up time and AMG convergence. Therefore, we use $\tau = 0.1$
as default.

\subsection{Weak scalability test}

Here, we carry out a weak scalability study of energy minimization AMG for
linear elasticity on a unit cube and different levels of refinement.
The unit cube $[0,1]^3$ is discretized by regular tetrahedral elements, the material is
homogeneous, and all displacements are prevented on the square region
$[0,0.125] \times [0,0.125]$ at $z = 0$.
The mesh sizes are chosen such that each subsequent refinement produces about
twice the number degrees of freedom with respect to the previous mesh. The problem sizes
range from 222k rows to 124M rows, with an average entries-per-row ratio of 44.47.

Two sets of AMG parameters are used with Chronos: the first set targets a constant PCG iteration count, while
the second minimizes the total solution time ($T_t$). The first section of Table \ref{tab:resWeakCube} provides the
outcome of the first test. The iteration count increases very slowly from 23
to 33, while the problem size increases by a factor of about $2^9$. However, this nearly
optimal scaling comes with relatively large complexities.
As a consequence, total times are also relatively large, especially the setup time ($T_{p}$).
The time for energy minimization ($T_i$) scales quite well, however, with only a factor of 2 difference between the first
and last refinement levels.

Next to reduce the setup and solution times, we increase the AMG strength threshold \cite{RuStu1987} to allow for
lower complexities. The second section of Table \ref{tab:resWeakCube} collects the new outputs. It can be seen
that the relative trends are the same as in the previous tests, but that the timings are lower (30\% on average). The reduced
complexity is thus advantageous by providing faster wall-clock times, even at the expense of more iterations $n_{it}$.

\begin{table}
  \centering
  \small
  \caption{Weak scalability results for regular cube. Three sets of results are
    reported: i) energy minimization-based AMG via Chronos to produce an almost constant
    iteration count; ii) the same tuned to minimize the total time; and iii) PETSc's GAMG
    using all default parameters with the exception of $\mu$, which is chosen to reduce
    the overall solution time.}
  \label{tab:resWeakCube}
  \begin{tabular}{rr|rrrrrrrrrr}
    \toprule
    \multicolumn{2}{c|}{solver} & nrows & $n_{cr}$ & $N$ & $C_{gr}$ & $C_{op}$ & $n_{it}$ & $T_p$ [s] & $T_s$ [s] &
      $T_t$ [s] & $T_i$ [s] \\
    \midrule
    \parbox[t]{6pt}{\multirow{10}{*}{\rotatebox[origin=c]{90}{energy minimization}}}
    \parbox[t]{0pt}{\multirow{10}{*}{\rotatebox[origin=c]{90}{minimal iteration count}}}
    &&     222k &   1 &  1 & 1.077 & 1.540 & 23 & 28.7 & 2.7 & 31.4 & 0.12 \\
    &&     447k &   2 &  1 & 1.076 & 1.559 & 24 & 33.0 & 2.9 & 35.9 & 0.12 \\
    &&     902k &   4 &  1 & 1.076 & 1.580 & 26 & 36.1 & 3.4 & 39.5 & 0.13 \\
    &&   1,778k &   8 &  1 & 1.075 & 1.596 & 27 & 39.0 & 3.6 & 42.7 & 0.13 \\
    &&   3,675k &  16 &  1 & 1.075 & 1.610 & 28 & 43.3 & 4.1 & 47.4 & 0.15 \\
    &&   7,546k &  32 &  1 & 1.075 & 1.620 & 29 & 53.9 & 5.3 & 59.2 & 0.18 \\
    &&  15,533k &  64 &  2 & 1.075 & 1.630 & 30 & 61.2 & 5.9 & 67.2 & 0.20 \\
    &&  31,081k & 128 &  4 & 1.075 & 1.670 & 30 & 74.9 & 6.5 & 81.4 & 0.22 \\
    &&  62,391k & 256 &  8 & 1.075 & 1.642 & 32 & 72.8 & 6.8 & 79.7 & 0.21 \\
    && 124,265k & 512 & 16 & 1.075 & 1.646 & 33 & 88.8 & 7.8 & 96.7 & 0.24 \\
    \midrule
    \parbox[t]{6pt}{\multirow{10}{*}{\rotatebox[origin=c]{90}{energy minimization}}}
    \parbox[t]{0pt}{\multirow{10}{*}{\rotatebox[origin=c]{90}{best solution time}}}
    &&     222k &   1 &  1 & 1.047 & 1.252 & 31 & 19.5 &  3.1 & 22.6 & 0.10 \\
    &&     447k &   2 &  1 & 1.047 & 1.260 & 31 & 22.6 &  3.3 & 25.9 & 0.11 \\
    &&     902k &   4 &  1 & 1.047 & 1.269 & 34 & 24.4 &  3.8 & 28.2 & 0.11 \\
    &&   1,778k &   8 &  1 & 1.047 & 1.274 & 34 & 27.4 &  4.0 & 31.4 & 0.12 \\
    &&   3,675k &  16 &  1 & 1.047 & 1.279 & 37 & 30.6 &  4.7 & 35.3 & 0.13 \\
    &&   7,546k &  32 &  1 & 1.047 & 1.283 & 38 & 37.8 &  6.1 & 43.9 & 0.16 \\
    &&  15,533k &  64 &  2 & 1.046 & 1.286 & 49 & 44.3 &  8.5 & 52.8 & 0.17 \\
    &&  31,081k & 128 &  4 & 1.046 & 1.289 & 41 & 45.0 &  7.5 & 52.5 & 0.18 \\
    &&  62,391k & 256 &  8 & 1.046 & 1.291 & 43 & 48.4 &  8.6 & 57.0 & 0.20 \\
    && 124,265k & 512 & 16 & 1.046 & 1.292 & 51 & 52.5 & 10.8 & 63.3 & 0.21 \\
    \midrule
    \parbox[t]{6pt}{\multirow{10}{*}{\rotatebox[origin=c]{90}{GAMG ($\mu = 0.01$)}}}
    \parbox[t]{0pt}{\multirow{10}{*}{\rotatebox[origin=c]{90}{best solution time}}}
    &&     222k &   1 &  1 & N/A & 1.479 &  58 &  11.19 & 13.72 &  24.92 & 0.24 \\
    &&     447k &   2 &  1 & N/A & 1.503 &  69 &  10.02 & 17.12 &  27.14 & 0.25 \\
    &&     902k &   4 &  1 & N/A & 1.526 &  73 &  11.41 & 19.19 &  30.59 & 0.26 \\
    &&   1,778k &   8 &  1 & N/A & 1.549 &  78 &  12.63 & 21.05 &  33.68 & 0.27 \\
    &&   3,675k &  16 &  1 & N/A & 1.571 &  82 &  14.83 & 24.86 &  39.68 & 0.30 \\
    &&   7,546k &  32 &  1 & N/A & 1.608 &  86 &  23.04 & 35.52 &  58.56 & 0.41 \\
    &&  15,533k &  64 &  2 & N/A & 1.618 &  93 &  27.15 & 39.33 &  66.48 & 0.42 \\
    &&  31,081k & 128 &  4 & N/A & 1.703 &  97 &  37.48 & 41.58 &  79.06 & 0.43 \\
    &&  62,391k & 256 &  8 & N/A & 1.803 &  98 &  58.98 & 43.37 & 102.35 & 0.44 \\
    && 124,265k & 512 & 16 & N/A & 1.953 & 100 & 119.16 & 50.29 & 169.45 & 0.50 \\
    \bottomrule
  \end{tabular}
\end{table}

For comparison, the same set of problems is next solved with GAMG from PETSc. The
default values for all parameters are used, as it turned out they were already the best.
The only exception is the threshold for dropping edges in the aggregation graph ($\mu$),
whose value is reported alongside the results.
The third section of Table \ref{tab:resWeakCube} shows the output for GAMG.
The complexities and run-times are higher than for Chronos, especially
for the setup stage. The iteration counts are usually between 2 and 3 times larger than
those required by Chronos. For this test problem, reducing complexity by setting $\mu = 0.0$ is not beneficial as the increase in the iteration count cancels the set-up time reduction.
We also note that both the operator complexity and set-up
time increase significantly with the refinement level, while energy minimization
is able to better limit growth in these quantities. Comparing two codes is fraught with difficulty, but these results do indicate that energy minimization is an efficient approach in parallel for this problem.

\subsection{Challenging Real World Problems}

We now examine the proposed energy minimization approach for a set of challenging
real-world problems arising from discretized PDEs in both fluid dynamics and
mechanics. The former class consists of problems from the discretization of the
Laplace operator, such as underground fluid flow, compressible or incompressible airflow
around complex geometries, or porous flow. The latter class consists of mechanical
applications such as subsidence analysis, hydrocarbon recovery, gas storage
(geomechanics), mesoscale simulation of composite materials (mesoscale), mechanical
deformation of human tissues or organs subjected to medical interventions (biomedicine),
and design and analysis of mechanical elements, e.g., cutters, gears, air-coolers
(mechanical). The selected problems are not only large but also characterized by
severe ill-conditioning due to mesh distortions, material heterogeneity, and anisotropy.
They are listed in Table \ref{tab:matrices} with details
about the size, the number of nonzeros, and the application field from which they arise.

\begin{table}
  \centering
  \small
  \caption{Matrix sizes and number of non-zeroes for the real-world problems.}
  \label{tab:matrices}
  \begin{tabular}{lrrrr}
    \toprule
    matrix & nrows & nterms & avg nt/row & application \\
    \midrule
    \texttt{guenda11m} &  11,452,398 &    512,484,300 & 44.75 & geomechanics \\
    \texttt{agg14m}    &  14,106,408 &    633,142,730 & 44.88 & mesoscale \\
    \texttt{tripod20m} &  19,798,056 &    871,317,864 & 44.01 & mechanical \\
    \texttt{M20}       &  20,056,050 &  1,634,926,088 & 81.52 & mechanical \\
    \texttt{wing}      &  33,654,357 &  2,758,580,899 & 81.97 & mechanical \\
    \texttt{Pflow73m}  &  73,623,733 &  2,201,828,891 & 29.91 & reservoir \\
    \texttt{c4zz134m}  & 134,395,551 & 10,806,265,323 & 80.41 & biomedicine \\
    \bottomrule
  \end{tabular}
\end{table}

Table \ref{tab:resReal} reports the AMG
performance on these benchmarks when using the energy minimization procedure, 
classical prolongation smoothing (one step of weighed-Jacobi on the tentative prolongation), and GAMG, respectively.
The overall best time is highlighted in boldface.
As before, please note that for GAMG only the threshold value for dropping edges in the
aggregation graph ($\mu$) has been tuned. All the other parameters are used with their
default value, as they were already the best.

With respect to classical prolongation smoothing, the energy minimization procedure
is able to reduce the complexities, in particular $C_{op}$, the setup time $T_p$, and
also the iteration count (with the only exception being \texttt{Pflow73m}).  It is the reduced complexities that allow energy minimization to achieve the lower setup time.
The overall gain in total time is in the range $5$--$55\%$ for all test cases.

Energy minimization also compares favorably with GAMG.
GAMG provides a faster total time than energy minimization-based AMG on
two cases out of seven, \texttt{agg14m} and \texttt{M20}, and a similar total time on
\texttt{tripod20m}. The situation is reversed on the most challenging
examples, where GAMG is significantly slower and is unable to solve
\texttt{Pflow73m}. We briefly note that, unexpectedly, the ParMETIS partitioning
significantly harms GAMG effectiveness on \texttt{c4zz134m}. For this case, we
report GAMG performance on the matrix with its native ordering and mark this test with
a `*'. Typically, the GAMG set-up time is faster than energy minimization,
but energy minimization allows for a preconditioner of higher quality, which
significantly reduces the total time in the most difficult cases.
Fig. \ref{fig:realWorldRelTime} collects all these results, reporting the relative total
times. The setup and solve phases are denoted by different shading.

\begin{table}
  \centering
  \small
  \caption{Results for the real-world cases using: i) energy minimization AMG; ii)
    classical prolongation smoothing; and iii) PETSc's GAMG. Default PETSc GAMG parameters
are used. Only $\mu$, i.e., the threshold for dropping edges in aggregation graph, is
changed. `*' means that the matrix has not been partitioned with ParMETIS. Please note
that only the threshold value for dropping edges in the aggregation graph ($\mu$)
has been tuned. All the other parameters are used with their default value, as it turned
out they were already the best.}
  \label{tab:resReal}
  \begin{tabular}{rr|rrrrrrrrrr}
    \toprule
    \multicolumn{2}{c|}{solver} & case & $N$ & $\mu$ & $C_{gr}$ & $C_{op}$ & $n_{it}$ & $T_p$ [s] & $T_s$ [s] & $T_t$ [s] \\
    \midrule
    \parbox[t]{6pt}{\multirow{7}{*}{\rotatebox[origin=c]{90}{energy}}}
    \parbox[t]{0pt}{\multirow{7}{*}{\rotatebox[origin=c]{90}{minimization}}}
    && \texttt{guenda11m} & 1 & N/A & 1.041 & 1.325 &  987 & 314.0 & 399.0 & \textbf{713.0} \\
    && \texttt{agg14m}    & 1 & N/A & 1.042 & 1.322 &   23 &  66.7 &   7.2 &  74.0 \\
    && \texttt{tripod20m} & 2 & N/A & 1.049 & 1.302 &  104 &  40.3 &  22.2 &  \textbf{62.6} \\
    && \texttt{M20}       & 2 & N/A & 1.055 & 1.304 &  111 &  98.0 &  40.2 & 138.0 \\
    && \texttt{wing}      & 8 & N/A & 1.055 & 1.297 &  140 &  47.2 &  25.3 & \textbf{ 72.5} \\
    && \texttt{Pflow73m}  & 4 & N/A & 1.028 & 1.101 & 1169 & 225.0 & 424.0 & \textbf{649.0} \\
    && \texttt{c4zz134m}  & 8 & N/A & 1.029 & 1.122 &  154 &  72.7 &  48.8 & \textbf{122.0} \\
    \midrule
    \parbox[t]{6pt}{\multirow{7}{*}{\rotatebox[origin=c]{90}{smoothed}}}
    \parbox[t]{0pt}{\multirow{7}{*}{\rotatebox[origin=c]{90}{prolongation}}}
    && \texttt{guenda11m} & 1 & N/A & 1.041 & 1.378 & 1771 & 307.0 & 750.0 & 1060.0 \\
    && \texttt{agg14m}    & 1 & N/A & 1.042 & 1.371 &   48 &  62.5 &  15.5 &   78.1 \\
    && \texttt{tripod20m} & 2 & N/A & 1.048 & 1.336 &  212 &  34.6 &  47.5 &   82.2 \\
    && \texttt{M20}       & 2 & N/A & 1.054 & 1.733 &  154 & 167.0 &  76.6 &  244.0 \\
    && \texttt{wing}      & 8 & N/A & 1.055 & 1.697 &  301 &  93.5 &  71.6 &  165.1 \\
    && \texttt{Pflow73m}  & 4 & N/A & 1.058 & 1.371 &  841 & 441.3 & 394.5 &  836.0 \\
    && \texttt{c4zz134m}  & 8 & N/A & 1.028 & 1.199 &  277 &  79.2 &  98.9 &  178.0 \\
    \midrule
    \multicolumn{2}{c|}{\parbox[t]{6pt}{\multirow{7}{*}{\rotatebox[origin=c]{90}{GAMG}}}}
    & \texttt{guenda11m} & 1 & 0.00 & N/A & 1.524 & 2237 & 22.3 & 1553.9 & 1576.1 &  \\
    && \texttt{agg14m}    & 1 & 0.00 & N/A & 1.557 &   33 & 20.6 &   32.2 &   \textbf{52.7} &  \\
    && \texttt{tripod20m} & 2 & 0.01 & N/A & 1.679 &   48 & 32.2 &   32.2 &   64.4 &  \\
    && \texttt{M20}       & 2 & 0.01 & N/A & 1.203 &   60 & 36.7 &   62.4 &   \textbf{99.1} &  \\
    && \texttt{wing}      & 8 & 0.01 & N/A & 1.204 &  250 & 34.0 &  108.4 &  142.4 &  \\
    && \texttt{Pflow73m}  & 4 &  --- & N/A &   --- &  --- &  --- &    --- &    --- &  \\
    && \texttt{c4zz134m}* & 8 & 0.01 & N/A & 1.233 &  156 & 110.9 &  250.38 & 361.33 &  \\
    \bottomrule
  \end{tabular}
\end{table}

\begin{figure}
  \centering
  \includegraphics[width=0.95\linewidth]{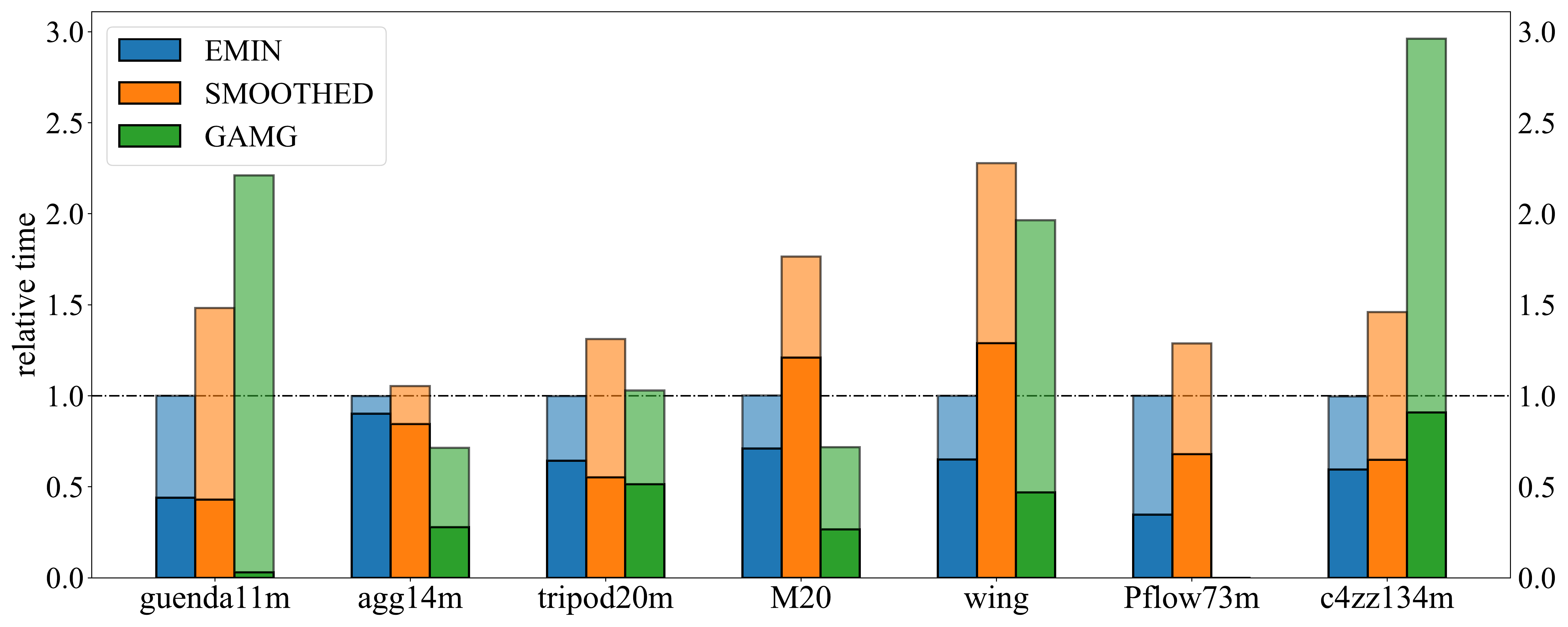}
  \caption{Comparison in terms of relative total time for the different preconditioners.
Darker portions of the bars represent AMG setup, while the lighter segments
represent the time spent in the solve phase. The relative baseline is time taken by the energy
minimization phase to build prolongation.}
  \label{fig:realWorldRelTime}
\end{figure}

\section{Conclusions}
\label{sec:concl}

This work provides evidence of the potential of a novel energy minimization procedure in
constructing the prolongation operator in AMG. While the theoretical advantages of this
idea are well known in the literature, the computational aspects and its practical
feasibility in a massively parallel software were still under discussion.

With this contribution, we have highlighted how the energy minimization approach can be
effectively implemented in a classical AMG setting, leading to robust and cost-effective 
prolongation operators when compared to other approaches. It is also shown how, especially
in challenging problems, this technique can lead to considerably faster AMG convergence.
The prolongation energy can be minimized with several schemes. We have adopted a restricted
conjugate gradient, accelerated by suitable preconditioners. We presented and analyzed
Jacobi and Gauss-Seidel preconditioners, restricting our attention to these two because
of their applicability in matrix-free mode. 
The experiments
show that Gauss-Seidel has the potential to be faster than Jacobi, however, its
efficient parallel implementation is not straightforward and needs more attention.

Weak scalability has been assessed on an elasticity
model problem by discretizing a cube with regular tetrahedra. The proposed algorithms have been
implemented in a hybrid MPI-OpenMP linear solver and its performance has been compared
to another well recognized AMG implementation (GAMG) using a set of large and difficult problems arising
from real-world applications.

In the future, we plan to further reduce set-up time by investigating other preconditioning
techniques as those described in~\cite{BenFac22}, and to extend this approach to non-symmetric
problems as well.


\bibliographystyle{siam}
\bibliography{mybibfile}

\begin{thebibliography}{10}

\bibitem{balay2019petsc}
{\sc S.~Balay, S.~Abhyankar, M.~Adams, J.~Brown, P.~Brune, K.~Buschelman,
  L.~Dalcin, A.~Dener, V.~Eijkhout, W.~Gropp, et~al.}, {\em Petsc users
  manual},  (2019).

\bibitem{BenGolLie99}
{\sc M.~Benzi, G.~H. Golub, and J.~Liesen}, {\em Numerical solution of saddle
  point problems}, Acta Numer., 14 (2005), pp.~1--137.

\bibitem{bootstrap2011}
{\sc A.~Brandt, J.~Brannick, K.~Kahl, and I.~Livshits}, {\em Bootstrap amg},
  SIAM Journal on Scientific Computing, 33 (2011), pp.~612--632.

\bibitem{BrBrKaLi2015}
{\sc A.~Brandt, J.~Brannick, K.~Kahl, and I.~Livshits}, {\em Algebraic distance
  for anisotropic diffusion problems: multilevel results}, Electronic
  Transactions on Numerical Analysis, 44 (2015), pp.~472--496.

\bibitem{BrMcRu1982}
{\sc A.~Brandt, S.~F. McCormick, and J.~W. Ruge}, {\em Algebraic multigrid
  ({AMG}) for automatic multigrid solution with application to geodetic
  computations}, tech. rep., Institute for Computational Studies, Colorado
  State University, 1982.

\bibitem{BrMcRu1984}
\leavevmode\vrule height 2pt depth -1.6pt width 23pt, {\em Algebraic multigrid
  ({AMG}) for sparse matrix equations}, in Sparsity and Its Applications, D.~J.
  Evans, ed., Cambridge Univ. Press, Cambridge, 1984, pp.~257--284.

\bibitem{brannick2018optimal}
{\sc J.~Brannick, F.~Cao, K.~Kahl, R.~D. Falgout, and X.~Hu}, {\em Optimal
  interpolation and compatible relaxation in classical algebraic multigrid},
  SIAM Journal on Scientific Computing, 40 (2018), pp.~A1473--A1493.

\bibitem{BraFal10}
{\sc J.~J. Brannick and R.~D. Falgout}, {\em {Compatible Relaxation and
  Coarsening in Algebraic Multigrid}}, SIAM Journal on Scientific Computing, 32
  (2010-01), pp.~1393 -- 1416.

\bibitem{BreFalMacManMccRug05}
{\sc M.~Brezina, R.~Falgout, S.~MacLachlan, T.~Manteuffel, S.~McCormick, and
  J.~Ruge}, {\em {Adaptive Smoothed Aggregation ({$\alpha$}SA) Multigrid}},
  SIAM Rev., 47 (2005), pp.~317--346.

\bibitem{BrHeMc2000}
{\sc W.~L. Briggs, V.~E. Henson, and S.~F. McCormick}, {\em A multigrid
  tutorial}, SIAM, Philadelphia, PA, USA, 2nd~ed., 2000.

\bibitem{de2006reducing}
{\sc H.~De~Sterck, U.~M. Yang, and J.~J. Heys}, {\em Reducing complexity in
  parallel algebraic multigrid preconditioners}, SIAM Journal on Matrix
  Analysis and Applications, 27 (2006), pp.~1019--1039.

\bibitem{FaVa2004}
{\sc R.~D. Falgout and P.~S. Vassilevski}, {\em On generalizing the algebraic
  multigrid framework}, SIAM J. Numer. Anal., 42 (2004), pp.~1669--1693.

\bibitem{CHRONOS-webpage}
{\sc M.~Frigo, G.~Isotton, and C.~Janna}, {\em {Chronos} {w}eb page}.
\newblock \url{https://www.m3eweb.it/chronos}, 2021.

\bibitem{Gor08}
{\sc S.~A. Goreinov, I.~V. Oseledets, D.~Savostyanov, E.~E. Tyrtyshnikov, and
  N.~L. Zamarashkin}, {\em {How to find a good submatrix}}, tech. rep., Nov.
  2008.

\bibitem{IsoFriSpiJan21}
{\sc G.~Isotton, M.~Frigo, N.~Spiezia, and C.~Janna}, {\em Chronos: a general
  purpose classical {AMG} solver for high performance computing}, SIAM J. Sci.
  Comput., 43 (2021), pp.~C335--C357.

\bibitem{Knu85}
{\sc D.~E. Knuth}, {\em {Semi-optimal bases for linear dependencies}}, Linear
  and Multilinear Algebra, 17 (1985), pp.~1--4.

\bibitem{BenFac22}
\leavevmode\vrule height 2pt depth -1.6pt width 23pt, {\em Solving linear
  systems of the form $(a + \gamma u u^t) x = b$ by preconditioned iterative
  methods}, Linear and Multilinear Algebra,  (2022).

\bibitem{parmetis}
{\sc K.~Lab}, {\em {ParMETIS - Parallel Graph Partitioning and Fill-reducing
  Matrix Ordering}}.
\newblock \url{http://glaros.dtc.umn.edu/gkhome/metis/parmetis/overview}, 2022.

\bibitem{maclachlan2006adaptive}
{\sc S.~MacLachlan, T.~Manteuffel, and S.~McCormick}, {\em Adaptive
  reduction-based amg}, Numerical Linear Algebra with Applications, 13 (2006),
  pp.~599--620.

\bibitem{paludetto2019novel}
{\sc V.~A.~P. Magri, A.~Franceschini, and C.~Janna}, {\em {A novel algebraic
  multigrid approach based on adaptive smoothing and prolongation for
  ill-conditioned systems}}, SIAM Journal on Scientific Computing, 41 (2019),
  pp.~A190--A219.

\bibitem{ManBreVan99}
{\sc J.~Mandel, M.~Brezina, and P.~Van\v{e}k}, {\em Energy optimization of
  algebraic multigrid bases}, Computing, 62 (1999), pp.~205--228.

\bibitem{manteuffel2019nonsymmetric}
{\sc T.~A. Manteuffel, S.~M\"{u}nzenmaier, J.~Ruge, and B.~Southworth}, {\em
  Nonsymmetric reduction-based algebraic multigrid}, SIAM Journal on Scientific
  Computing, 41 (2019), pp.~S242--S268.

\bibitem{ManOlsSchSou17}
{\sc T.~A. Manteuffel, L.~N. Olson, J.~B. Schroder, and B.~S. Southworth}, {\em
  A root-node-based algebraic multigrid method}, SIAM J. Sci. Comput., 39
  (2017), pp.~S723--S756.

\bibitem{manteuffel2018nonsymmetric}
{\sc T.~A. Manteuffel, J.~Ruge, and B.~S. Southworth}, {\em Nonsymmetric
  algebraic multigrid based on local approximate ideal restriction ($\ell$
  air)}, SIAM Journal on Scientific Computing, 40 (2018), pp.~A4105--A4130.

\bibitem{olson2010new}
{\sc L.~N. Olson, J.~Schroder, and R.~S. Tuminaro}, {\em A new perspective on
  strength measures in algebraic multigrid}, Numerical Linear Algebra with
  Applications, 17 (2010), pp.~713--733.

\bibitem{OlsSchTum11}
{\sc L.~N. Olson, J.~B. Schroder, and R.~S. Tuminaro}, {\em A general
  interpolation strategy for algebraic multigrid using energy minimization},
  SIAM J. Sci. Comput., 33 (2011), pp.~966--991.

\bibitem{RuStu1987}
{\sc J.~W. Ruge and K.~St{\"{u}}ben}, {\em Algebraic multigrid ({AMG})}, in
  Multigrid Methods, S.~F. McCormick, ed., Frontiers Appl. Math., SIAM,
  Philadelphia, 1987, pp.~73--130.

\bibitem{SalTum08}
{\sc M.~Sala and R.~S. Tuminaro}, {\em A new {P}etrov-{G}alerkin smoothed
  aggregation preconditioner for nonsymmetric linear systems}, SIAM J. Sci.
  Comput., 31 (2008), pp.~143--166.

\bibitem{TrOo2001}
{\sc U.~Trottenberg, C.~Oosterlee, and A.~Sch$\ddot{\mbox{u}}$ller}, {\em
  Multigrid}, Academic Press, London, UK, 2001.

\bibitem{PetscGuide}
{\sc L.~UChicago~Argonne and the PETSc Development~Team}, {\em {PCGAMG}}.
\newblock \url{https://petsc.org/main/docs/manualpages/PC/PCGAMG/index.html},
  2022.

\bibitem{Van96}
{\sc P.~Van{\v{e}}k, J.~Mandel, and M.~Brezina}, {\em {Algebraic multigrid by
  smoothed aggregation for second and fourth order elliptic problems}},
  Computing, 56 (1996), pp.~179--196.

\bibitem{vassilevski2008multilevel3}
{\sc P.~S. Vassilevski}, {\em {Multilevel block factorization preconditioners:
  Matrix-based analysis and algorithms for solving finite element equations}},
  Springer Science \& Business Media, 2008.

\bibitem{WaChSm2000}
{\sc W.~L. Wan, T.~F. Chan, and B.~Smith}, {\em An energy-minimizing
  interpolation for robust multigrid methods}, SIAM J. Sci. Comput., 21 (2000),
  pp.~1632--1649.

\bibitem{XuZik17}
{\sc J.~Xu and L.~Zikatanov}, {\em {Algebraic multigrid methods}}, Acta
  Numerica, 26 (2017), pp.~591--721.

\bibitem{xu2018ideal}
{\sc X.~Xu and C.-S. Zhang}, {\em On the ideal interpolation operator in
  algebraic multigrid methods}, SIAM Journal on Numerical Analysis, 56 (2018),
  pp.~1693--1710.

\end{thebibliography}

\end{document}